\documentclass[a4paper]{amsart}

\usepackage{amsmath,amssymb,amsthm}
\usepackage{mathrsfs}
\usepackage{indentfirst,color}
\usepackage[colorlinks,citecolor=red,linkcolor=blue,urlcolor=cyan]{hyperref}
\usepackage{graphicx}
\usepackage{tikz,tikz-cd}
\usepackage{bm}

\theoremstyle{plain}%
\newtheorem{theorem}{Theorem}[section]%
\newtheorem{proposition}[theorem]{Proposition}%

\theoremstyle{definition}%
\newtheorem{definition}[theorem]{Definition}%
\newtheorem{remark}[theorem]{Remark}%

\newcommand{\RR}{{\mathbb{R}}}%

\makeatletter
\newsavebox{\@brx}
\newcommand{\llangle}[1][]{\savebox{\@brx}{\(\m@th{#1\langle}\)}%
    \mathopen{\copy\@brx\kern-0.5\wd\@brx\usebox{\@brx}}}
\newcommand{\rrangle}[1][]{\savebox{\@brx}{\(\m@th{#1\rangle}\)}%
    \mathclose{\copy\@brx\kern-0.5\wd\@brx\usebox{\@brx}}}
\makeatother

\newcommand{\loc}{\textup{loc}}%
\newcommand{\nd}{\textup{nd}}%
\newcommand{\image}{\textup{i}}%
\renewcommand{\bar}[1]{\overline{#1}}%
\newcommand{\ddbar}{\image\partial\bar{\partial}}%

\renewcommand{\leq}{\leqslant}%
\renewcommand{\geq}{\geqslant}%

\numberwithin{equation}{section}

\begin{document}

\title[A remark on Bogomolov type vanishing theorem]{A remark on Bogomolov type vanishing theorem}

\author[Xiankui Meng]{Xiankui Meng}
\address{Xiankui Meng: School of Mathematical Sciences and Key Laboratory of Mathematics and Information Networks (Ministry of Education), Beijing University of Posts and Telecommunications, Beijing 100876, China.}
\email{mengxiankui@amss.ac.cn}

\author[Chenghao Qing]{Chenghao Qing}
\address{Chenghao Qing: Yau Mathematical Sciences Center, Tsinghua University, Beijing 100084, China.}
\email{qingchenghao@amss.ac.cn}

\author[Xiangyu Zhou]{Xiangyu Zhou}
\address{Xiangyu Zhou: Institute of Mathematics, Academy of Mathematics and Systems Science, Chinese Academy of Sciences, Beijing 100190, China.}
\email{xyzhou@math.ac.cn}

\date{}

\thanks{The first author was partially supported by the National Natural Science Foundation of China (Grant No. 12271057) and by the National Key R\&D Program of China (Grant No. 2021YFA1002600). 
The second author was supported by the China Postdoctoral Science Foundation (Grant No. 2025M773087). 
The third author was supported by National Key R\&D Program of China (Grant No. 2021YFA1003100) and by the National Natural Science Foundation of China (Grant No. 12288201).
}

\begin{abstract}
    
    Several vanishing theorems for pseudo-effective line bundles are presented. 
    All related results in the present note can be derived from already known theorems, in particular from the Steenbrink vanishing theorem and Boucksom's generalization of Bogomolov's vanishing theorem. 
    No originality or priority is claimed for these statements; they are listed here merely as conjugate forms of the Kawamata--Viehweg vanishing theorem, for convenience of reference and comparison with existing literature.
    
\end{abstract}

\keywords{Pseudo-effective line bundle, Vanishing theorem, Multiplier ideal sheaf, Singular Hermitian metric}

\subjclass[2020]{
    32L20, 
    14F18, 
    32L10, 
    32C35. 
}

\maketitle

\section{Introduction}

Vanishing theorems for cohomology groups valued in coherent analytic sheaves are powerful tools in algebraic geometry and several complex variables. 
Various generalizations of the classical Kodaira vanishing theorem are great developed in this direction and used in classification theory of higher-dimensional projective algebraic varieties.

In complex geometry, the Akizuki-Kodaira-Nakano vanishing theorem asserts that if $L$ is a positive line bundle over a compact K\"ahler manifold $X$ of dimension $n$, then
$$H^q(X,\Omega_X^p\otimes L)=0 \quad \text{ for all } p+q\geq n+1.$$
This was generalized to sheaves of logarithmic differential forms by Norimatsu \cite{Nor78} using analytic methods (see also Deligne-Illusie's proof \cite{DI87} by the characteristic $p$ methods). 
More precisely, they proved that $$H^q(X,\Omega_X^p(\log D)\otimes L)=0  \quad \text{ for all } p+q\geq n+1,$$
where $D$ is a simple normal crossing divisor in $X$. 
We recall the following recent logarithmic type vanishing theorem of Huang-Liu-Wan-Yang \cite{HLWY23}.

\begin{theorem}[Huang-Liu-Wan-Yang] \label{Thm:Huang-Liu-Wan-Yang vanishing}
    Let $X$ be a compact K\"ahler manifold of dimension $n$ and $D=\sum_{i=1}^{s}D_i$ a simple normal crossing divisor on $X$.
    Let $L$ be a line bundle and $\Delta=\sum_{i=1}^{s}a_iD_i$ be an $\mathbb{R}$-divisor with $a_i\in[0,1]$ such that $L\otimes\mathcal{O}_X([\Delta])$ is a $k$-positive $\mathbb{R}$-line bundle, 
    where $\mathcal{O}_X([\Delta])$ denotes the $\mathbb{R}$-line bundle associated with the $\mathbb{R}$-divisor $\Delta$. 
    Then for any nef line bundle $N$, we have 
    $$H^q(X,\Omega^p_X(\log D)\otimes L\otimes N)=0 \quad \text{ for any } p+q\geq n+k+1.$$
    In particular, if $L$ is an ample line bundle, then 
    $$H^q(X,\Omega^p_X(\log D)\otimes L)=0 \quad \text{ for any } p+q\geq n+1.$$
\end{theorem}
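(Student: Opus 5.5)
Our plan is to prove Theorem~\ref{Thm:Huang-Liu-Wan-Yang vanishing} by a Bochner--Kodaira--Nakano argument for logarithmic forms with twisted weights, handled through a Dolbeault-type resolution. We first replace the $\mathbb{R}$-divisor by a smooth metric. Choose smooth Hermitian metrics $h_i$ on $\mathcal{O}_X(D_i)$ and canonical sections $s_i$. By hypothesis there is a smooth metric $h_L$ on $L$ such that the real $(1,1)$-form
$$\omega_0:=\image\Theta_{h_L}(L)+\sum_i a_i\,\image\Theta_{h_i}(\mathcal{O}_X(D_i))$$
is $k$-positive, i.e.\ has at least $n-k$ positive eigenvalues at each point. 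Since $N$ is nef, for every $\varepsilon>0$ there is a smooth metric $h_{N,\varepsilon}$ with $\image\Theta(N)\geq -\varepsilon\omega$ for a fixed K\"ahler form $\omega$. A small perturbation then gives a metric on $L\otimes N$ whose curvature, after adding the $\Delta$-correction, is still $k$-positive up to an error of size $\varepsilon$. This error is absorbed because the positive eigenvalues of $\omega_0$ are bounded below by compactness.

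Next we realize the twist by $\Delta$ as a singular metric on $L$. Consider the weight $\varphi=\sum_i a_i\log|s_i|^2_{h_i}$ on the trivial bundle, that is, the metric $h=h_Lh_N\prod_i|s_i|_{h_i}^{2a_i}$ on $L\otimes N$. Off $D$ its curvature equals $\omega_0+\image\Theta(N)$, which is $k$-positive. We then compute $H^q(X,\Omega^p_X(\log D)\otimes L\otimes N)$ using the $L^2$ logarithmic Dolbeault complex on $X\setminus D$. We equip $X\setminus D$ with a complete K\"ahler metric of Poincar\'e type near $D$,
$$\omega_P=\omega-\sum_i \ddbar\log\bigl(-\log|s_i|^2\bigr)^2 \quad(\text{scaled}),$$
and use the weight $h$. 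A local computation in the frame $dz_j/z_j$ shows that the $L^2$ $(p,\cdot)$-forms with values in $L\otimes N$ for $(\omega_P,h)$, with $a_i\in[0,1]$, compute the cohomology of $\Omega^p_X(\log D)\otimes L\otimes N$. This is the Zucker/Timmerscheidt-type $L^2$ resolution. The endpoints $a_i=0,1$ need care, and we would adjust by inserting the factor $(-\log|s_i|^2)^{\pm\alpha}$ to handle them.

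Finally we apply the Bochner--Kodaira--Nakano inequality on the complete manifold $(X\setminus D,\omega_P)$. On $(p,q)$-forms with $p+q\geq n+k+1$, the curvature operator $[\image\Theta,\Lambda_{\omega_P}]$ is bounded below by a positive constant. This is a standard eigenvalue estimate for $k$-positive forms: the sum of the $q$ smallest and $n-p$ largest eigenvalue terms is positive once $p+q>n+k$. This requires the curvature to be $k$-positive uniformly with respect to $\omega_P$, which is the delicate point. Near $D$, $\omega_P$ is much larger than $\omega$ in the normal directions, so the positivity of $\omega_0$ degenerates. We compensate by adding the small multiple $\delta\,\ddbar(-\log(-\log|s_i|^2))$, which is positive of Poincar\'e size near $D$, to the weight. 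This makes the curvature comparable to $\omega_P$ in the normal directions while changing $h$ only by a log factor that does not alter the $L^2$ cohomology. Hence the $L^2$ harmonic spaces vanish, and $H^q=0$. The main obstacle is this uniform positivity together with the $L^2$-resolution identification at the boundary. The ample case follows by taking $\Delta=0$, $N$ trivial and $k=0$.
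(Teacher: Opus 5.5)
The paper does not prove this theorem; it quotes it from \cite{HLWY23}. That proof has the same architecture as yours: an $L^2$ Dolbeault isomorphism for logarithmic forms on $X\setminus D$ with a Poincar\'e-type metric and singular weight, followed by a Bochner--Kodaira--Nakano estimate. As written, however, your curvature step fails. If $\lambda_1\le\dots\le\lambda_n$ are the eigenvalues of $\image\Theta$ relative to the K\"ahler metric, then on $(p,q)$-forms
\[
\langle[\image\Theta,\Lambda]u,u\rangle=\sum_{I,J}\Bigl(\sum_{j\in J}\lambda_j-\sum_{j\notin I}\lambda_j\Bigr)|u_{IJ}|^2 .
\]
Positivity therefore needs $\lambda_1+\dots+\lambda_q>\lambda_{p+1}+\dots+\lambda_n$. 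For $n=3$, $p=q=2$ and $(\lambda_j)=(1,1,100)$ this fails even though the curvature is positive definite. So being comparable to $\omega_P$ is not enough, and your ``standard eigenvalue estimate'' relative to $\Lambda_{\omega_P}$ does not even recover Akizuki--Nakano.

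Girbau's argument needs the K\"ahler metric to be built from the curvature itself. On $X\setminus D$ take $\omega_\epsilon=\image\Theta_{h'}+\epsilon\,\omega_P$, where $h'$ is your weight including the factor $(-\log|s_i|^2)^{\delta}$. This is where the hypothesis you dropped is essential: in \cite{HLWY23}, as in Girbau's theorem, $k$-positive means \emph{semi-positive} curvature with at least $n-k$ positive eigenvalues. Semi-positivity, up to errors $\eta\,\omega$ with $\eta\ll\epsilon$ coming from $N$ and from lower-order terms of the log factors, does three things:
\begin{itemize}
\item it makes $\omega_\epsilon$ a complete K\"ahler metric quasi-isometric to $\omega_P$, so the $L^2$ cohomology is unchanged;
\item it puts the eigenvalues $\mu_j$ of $\image\Theta_{h'}$ with respect to $\omega_\epsilon$ in $(-\eta',1)$;
\item combined with $\image\Theta_{h'}\geq c\,\omega_P$ on an $(n-k)$-dimensional subspace (which is what your $\delta$-term buys near $D$, with $\epsilon\ll c$), it makes at least $n-k$ of the $\mu_j$ satisfy $\mu_j\geq 1-\eta'$.
\end{itemize}
Then $\sum_{j\in J}\mu_j-\sum_{j\notin I}\mu_j\geq (q-k)(1-\eta')-k\eta'-(n-p)>0$ whenever $p+q\geq n+k+1$. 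In particular, the nef error is absorbed because it is small compared to $\epsilon\,\omega_P$, not because the positive eigenvalues of $\omega_0$ are bounded below.

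There are two further problems.

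First, your endpoint fix conflicts with the curvature requirement. For $a_i=0$, the form $dz_i/z_i$ is in $L^2$ for the weight $(-\log|s_i|^2)^{-\alpha}$ only when $\alpha>1$. That factor adds roughly $-\alpha\,\image\,\partial t_i\wedge\bar\partial t_i/t_i^2$ to the curvature, where $t_i=-\log|s_i|^2$. This is a Poincar\'e-sized negative term in the normal direction: it swamps your $+\delta$ term and destroys the positivity of $\omega_\epsilon$. Instead, move $a_i$ slightly into $(0,1)$. This changes the curvature only by a small smooth form, absorbed like the nef error. For $a_i\in(0,1)$ the weight $|s_i|^{2a_i}t_i^{\delta}$ has exactly $\Omega^p_X(\log D)$ as its sheaf of $L^2$ holomorphic $(p,0)$-forms.

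Second, the $L^2$ Dolbeault isomorphism does not follow from a ``local computation in the frame $dz_j/z_j$''. That computation only identifies the $L^2$ holomorphic forms. You still need a local $L^2$ $\bar\partial$-Poincar\'e lemma with estimates on $(\Delta^*)^{s}\times\Delta^{n-s}$ for the Poincar\'e metric and these weights. This is the main technical content of \cite{HLWY23}, and your proposal only asserts it.
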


The following logarithmic type Serre vanishing theorem is a corollary of Steenbrink's vanishing theorem \cite{Ste85}.
When $\mathcal{F}$ is a line bundle, this can also be derived from Theorem~\ref{Thm:Huang-Liu-Wan-Yang vanishing}.
We give the proof in Section~\ref{Section:Serre Vanishing} for the reader's convenience.

\begin{theorem}\label{MainThm 1}
    Let $X$ be a smooth projective variety of dimension $n$ and $H$ an ample divisor on $X$. 
    For any coherent sheaf $\mathcal{F}$ on $X$, there exists an integer $m_0=m_0(\mathcal{F})$ such that for every smooth divisor $A\in|mH|$ with $m\geq m_0$,
    $$H^q(X,\Omega^p_X(\log A) \otimes \mathcal{F})=0 \quad
    \text{ for all } p+q\geq n+1.$$
\end{theorem}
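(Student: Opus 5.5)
We aim to reduce the statement to the case where $\mathcal{F}$ is a direct sum of line bundles, using a locally free resolution and a dimension-shifting argument. The vanishing range $p+q\geq n+1$ is preserved when passing from a term of a resolution to $\mathcal{F}$, because the relevant index $q$ only goes up.

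\emph{Resolution.} Since $X$ is projective and $H$ is ample, $\mathcal{F}$ admits a resolution
\[
0\to E_n\to E_{n-1}\to\cdots\to E_0\to\mathcal{F}\to 0.
\]
For $i<n$ each $E_i$ is a finite direct sum of line bundles $\mathcal{O}_X(-k_iH)$ with $k_i\geq0$. The last term $E_n$ is locally free by Hilbert's syzygy theorem on the smooth $X$. The sheaf $\Omega^p_X(\log A)$ is locally free, so tensoring with it keeps the sequence exact.

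\emph{Reduction.} Chasing the resulting short exact sequences, $H^q(\Omega^p_X(\log A)\otimes\mathcal{F})$ is controlled by the groups $H^{q+i}(\Omega^p_X(\log A)\otimes E_i)$ for $0\leq i\leq n$. If $p+q\geq n+1$, then $p+(q+i)\geq n+1$ as well. So it suffices to prove the vanishing for each $E_i$ in the full range $p+q\geq n+1$.

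\emph{Line bundle terms.} For the summands $\mathcal{O}_X(-kH)$ we use the residue sequence
\[
0\to\Omega^p_X\to\Omega^p_X(\log A)\to\Omega^{p-1}_A\to0
\]
together with Serre duality. Since $A\sim mH$, we can rewrite
\[
\Omega^p_X(\log A)\otimes\mathcal{O}_X(-kH)=\Omega^p_X(\log A)(-A)\otimes\mathcal{O}_X((m-k)H).
\]
We also have $\Omega^p_X(\log A)(-A)\cong \Omega^{n-p}_X(\log A)^\vee\otimes K_X$. Hence the required vanishing is Serre dual to
\[
H^{n-q}\bigl(\Omega^{n-p}_X(\log A)\otimes\mathcal{O}_X(-(m-k)H)\bigr)=0 \quad\text{for } (n-p)+(n-q)\leq n-1.
\]
This is the Steenbrink-type (Norimatsu dual) vanishing for the ample bundle $\mathcal{O}_X((m-k)H)$, valid once $m>k$. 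Alternatively, apply Theorem~\ref{Thm:Huang-Liu-Wan-Yang vanishing} directly with $D=A$, $L=\mathcal{O}_X(-kH)$ and $\Delta=A$ (coefficient $1$). Then $L\otimes\mathcal{O}_X([\Delta])=\mathcal{O}_X((m-k)H)$ is ample, i.e.\ $0$-positive, so
\[
H^q\bigl(\Omega^p_X(\log A)\otimes\mathcal{O}_X(-kH)\bigr)=0 \quad\text{for } p+q\geq n+1.
\]
This handles every summand as soon as $m>\max_i k_i$.

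\emph{The locally free term $E_n$ (main obstacle).} $E_n$ is not a sum of line bundles, and the twisting trick only works for line bundles. The first idea is to resolve $E_n$ further by sums of $\mathcal{O}(-kH)$, but that resolution is infinite and we need indices up to $q+n+j$. This is still harmless: once $q+i>n$ the cohomology vanishes for dimension reasons. So we truncate the resolution at length $n+1$ beyond $\mathcal{F}$. The last kernel $K$ then only contributes $H^{q+n+1}(\Omega^p_X(\log A)\otimes K)$, which is zero since $q+n+1>n$.

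So we take a resolution of length $n+1$ by sums of negative line bundles $\mathcal{O}_X(-k_iH)$ and ignore the kernel at the end. We then set $m_0=\max_i k_i+1$. This depends only on $\mathcal{F}$, and not on the particular smooth divisor $A$, which is exactly what the uniformity in the statement requires.
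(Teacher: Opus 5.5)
Your proof is correct. The reduction step is the paper's argument in unrolled form. The paper uses a single surjection $\bigoplus_i\mathcal{O}_X(-kH)\twoheadrightarrow\mathcal{F}$ and runs a descending induction on $q$ applied to the kernel, with an induction hypothesis that is uniform over all coherent sheaves. You instead fix one finite resolution by sums of $\mathcal{O}_X(-k_iH)$ and dimension-shift, which gives the explicit bound $m_0=\max_i k_i+1$.

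The real difference is in the line bundle case.
\begin{itemize}
\item The paper shows $H^q(X,\Omega^p_X(\log A)\otimes L)=0$ for $p+q\geq n+1$ whenever $\mathcal{O}_X(A)\otimes L$ is ample. It uses only the sequence of Proposition~\ref{Prop:Exact sequence}~(2.c) twisted by $\mathcal{O}_X(A)\otimes L$, together with Akizuki--Kodaira--Nakano on $X$ and on $A$. So that step is self-contained and in effect reproves the needed case of Steenbrink's theorem for a single smooth divisor.
\item You quote Steenbrink's theorem via log Serre duality, or Theorem~\ref{Thm:Huang-Liu-Wan-Yang vanishing} with $\Delta=A$ (a route the paper also mentions). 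This is shorter but relies on a heavier black box.
\end{itemize}

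Two minor points.
\begin{itemize}
\item The label ``Norimatsu dual'' is inaccurate. The Serre dual of Norimatsu's theorem carries an extra factor $\mathcal{O}_X(-A)$. What you actually need is Steenbrink's statement $H^b(X,\Omega^a_X(\log A)\otimes M^{-1})=0$ for $a+b<n$ with $M$ ample, or equivalently the coefficient-one case of Theorem~\ref{Thm:Huang-Liu-Wan-Yang vanishing}.
\item $E_n$ was never an obstacle. Since $p\leq n$, the range $p+q\geq n+1$ forces $q\geq 1$, so $E_n$ only enters through $H^{q+n}$, which vanishes for dimension reasons. The syzygy theorem and the extended resolution are therefore unnecessary.
\end{itemize}
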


The notion of multiplier ideal sheaf plays a central role in modern higher-dimensional algebraic geometry. 
It appears naturally when considering singularities of metrics on line bundles. 

Another significant generalization of Kodaira vanishing theorem is Kawamata-Viehweg vanishing theorem, which plays a crucial role in the theory of minimal models for higher-dimensional complex algebraic varieties.
Let us recall here a version of the Kawamata-Viehweg vanishing theorem in the context of nef line bundles and multiplier ideal sheaves (see \cite{DemSmall}).
\begin{theorem}[Kawamata-Viehweg, Demailly] \label{Thm:Kawamata-Viehweg vanishing}
    Let $X$ be a projective algebraic manifold of dimension $n$ and let $L$ be a line bundle over $X$ such that
    some positive multiple $mL$ can be written $mL = F+D$ where $F$ is a nef line bundle and $D$ is an effective divisor.
    Then
    $$H^q(X,K_X\otimes L\otimes\mathcal{I}(m^{-1}D))=0 \quad\text{for } q\geq n-\nd(F)+1,$$
    where $\nd(F)$ is the numerical dimension of $F$.
    In particular, if $L$ is a nef line bundle, then
    $$H^q(X,K_X\otimes L)=0 \quad\text{for } q\geq n-\nd(L)+1.$$
\end{theorem}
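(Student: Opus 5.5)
The statement is the Kawamata--Viehweg vanishing theorem in Demailly's multiplier-ideal form. The paper only recalls it as known, so the plan is to reproduce the standard analytic proof. That proof rests on two ingredients. The first is Nadel-type vanishing with an almost-positive curvature current. The second is a Bochner-type estimate that records how many eigenvalues of the curvature of a nef bundle can be made uniformly positive, which is governed by $\nd(F)$.

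\emph{Step 1: metrics.} Fix a Kähler form $\omega$ on $X$. Since $F$ is nef, for every $\varepsilon>0$ there is a smooth Hermitian metric $h_\varepsilon$ on $F$ with $\image\Theta_{h_\varepsilon}(F)\geq-\varepsilon\omega$. On $\mathcal{O}_X(D)$ take the singular metric $|s_D|^{-2}$, whose curvature current is $[D]\geq 0$. Put on $L$ the metric $h_\varepsilon^{1/m}|s_D|^{-2/m}$. Its curvature current is $\frac1m(\image\Theta_{h_\varepsilon}(F)+[D])\geq -\frac{\varepsilon}{m}\omega$, and its multiplier ideal is exactly $\mathcal{I}(m^{-1}D)$, because $h_\varepsilon$ is smooth.

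\emph{Step 2: eigenvalue control.} Let $\nu=\nd(F)$. I use Demailly's observation that, after solving a Monge--Ampère (Calabi--Yau) equation $(\alpha_\varepsilon+\varepsilon\omega)^n = C_\varepsilon\,\omega^n$ in the class $c_1(F)+\varepsilon\{\omega\}$, the eigenvalues $\lambda_1\leq\dots\leq\lambda_n$ of the resulting form with respect to $\omega$ satisfy a useful dichotomy.
\begin{itemize}
\item Their product is $C_\varepsilon\sim c\,\varepsilon^{n-\nu}$, with $c>0$ coming from $(F^\nu\cdot\omega^{n-\nu})>0$.
\item The largest ones are bounded above, in an $L^1$ sense, by intersection numbers.
\end{itemize}
It follows that outside a set of small measure, at least $\nu$ of the eigenvalues stay bounded below by a fixed positive constant, while the remaining $n-\nu$ are only $\geq -\varepsilon$. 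This quantitative counting is the main obstacle. One must show that the sum $\lambda_1+\dots+\lambda_q$ is eventually positive in the integral sense needed for the Bochner--Kodaira inequality when $q\geq n-\nu+1$, and one must control the exceptional sets uniformly in $\varepsilon$. I would follow Demailly's argument in \cite{DemSmall}.

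\emph{Step 3: $L^2$ estimates and limit.} Take a $\bar\partial$-closed $(n,q)$-form $u$ with values in $L\otimes\mathcal{I}(m^{-1}D)$, using Čech/Dolbeault via Hörmander's $L^2$ resolution, which works since $X$ is compact Kähler. Solve $\bar\partial v_\varepsilon=u$ with the metric of Step 1, regularized along $D$ by Demailly's approximation so as to keep the multiplier ideal fixed. The Bochner--Kodaira--Nakano inequality gives $\|v_\varepsilon\|^2\leq\int\langle A_\varepsilon^{-1}u,u\rangle$, where $A_\varepsilon$ is the curvature operator on $(n,q)$-forms. By Step 2, for $q\geq n-\nu+1$ this right-hand side stays bounded, and the error $\|u-\bar\partial v_\varepsilon\|\to 0$. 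Weak compactness then shows that $u$ is exact in the limit, using the closedness of $\operatorname{im}\bar\partial$ from finite-dimensionality of cohomology.

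\emph{Step 4: the special case.} If $L$ is nef, take $m=1$, $F=L$, $D=0$. Then $\mathcal{I}(0)=\mathcal{O}_X$, which yields the second assertion.
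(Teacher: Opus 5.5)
There is a genuine gap, and it sits where you yourself placed the main obstacle: the eigenvalue control in Step~2 does not follow from the two facts you list. Let $\nu=\nd(F)<n$, let $\lambda_1\leq\dots\leq\lambda_n$ be the eigenvalues of the Monge--Amp\`ere solution $\alpha_\varepsilon+\varepsilon\omega$ with respect to $\omega$, and let $S_\delta=\{\lambda_{n-\nu+1}<\delta\}$. Your inputs are $\prod_j\lambda_j=C_\varepsilon\sim c\,\varepsilon^{n-\nu}$ and $\int_X\sigma_k(\lambda)\,\omega^n=\binom{n}{k}(c_1(F)+\varepsilon\{\omega\})^k\cdot\{\omega\}^{n-k}=O(\varepsilon^{\max(k-\nu,0)})$. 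From these, the best you get on $S_\delta$ is $C_\varepsilon\leq\delta\,\sigma_{n-1}(\lambda)$. Hence $\mathrm{Vol}(S_\delta)=O(\delta/\varepsilon)$, which is useless as $\varepsilon\to0$; only for $\nu=n$ does this give $O(\delta)$. In fact, nothing in these two facts prevents the $n-\nu+1$ smallest eigenvalues from all being of order $\varepsilon^{(n-\nu)/(n-\nu+1)}$ on a set whose measure is bounded below independently of $\varepsilon$. Without Step~2, Step~3 has no positivity of $\lambda_1+\dots+\lambda_q$ to work with.

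Even granting Step~2, Step~3 has a further problem. The curvature is only $\geq-\frac{\varepsilon}{m}\omega$, so $A_\varepsilon$ is not positive on the exceptional set and $\int\langle A_\varepsilon^{-1}u,u\rangle$ is not defined there. You would need a twisted estimate $u=\bar\partial v_\varepsilon+w_\varepsilon$ with $\|w_\varepsilon\|\to0$. That in turn needs uniform control of the Monge--Amp\`ere potentials in order to compare the $\varepsilon$-dependent $L^2$ norms.

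Also, the proof in \cite{DemSmall} that the paper refers to contains no such eigenvalue argument. Monge--Amp\`ere eigenvalue control is the tool for the compact K\"ahler setting (Demailly--Peternell, \cite{Cao14}), and there it is precisely the hard part.

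The route the paper points to, and imitates in its proof of Theorem~\ref{MainThm 2}, uses projectivity to bypass all of this. It proceeds by induction on $n$.

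First suppose $\nd(F)=n$. Then $F$ is big, so, with $\alpha$ a smooth representative of $c_1(F)$, there is a quasi-psh $\varphi$ with $\alpha+\ddbar\varphi\geq\omega$ after rescaling $\omega$. Put on $F$ the weight $(1-\delta)\varphi_\varepsilon+\delta\varphi$, where $\alpha+\ddbar\varphi_\varepsilon\geq-\varepsilon\omega$. Put on $L$ the $m$-th root of the product of this metric with $|s_D|^{-2}$. The curvature is then $\geq\frac1m(\delta-(1-\delta)\varepsilon)\omega>0$ for $\varepsilon\ll\delta$. By strong openness the multiplier ideal is $\mathcal{I}(m^{-1}D)$ for $\delta$ small, so Nadel vanishing kills $H^q$ for all $q\geq1$.

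Now suppose $\nd(F)<n$. Take a general smooth very ample $A$ with $\mathcal{I}(m^{-1}D)|_A=\mathcal{I}(m^{-1}D|_A)$, and use the adjunction sequence
\[
0\to K_X\otimes L\otimes\mathcal{I}(m^{-1}D)\to K_X\otimes L\otimes\mathcal{O}_X(A)\otimes\mathcal{I}(m^{-1}D)\to K_A\otimes L|_A\otimes\mathcal{I}(m^{-1}D|_A)\to 0 .
\]
The middle term has vanishing $H^q$ for $q\geq1$ by the big case, since $m(L+A)=(F+mA)+D$ with $F+mA$ ample. The right-hand term has vanishing $H^{q-1}$ for $q\geq n-\nd(F)+1$ by induction on dimension, because $\nd(F|_A)=\nd(F)$. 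Your Steps~1 and~4 carry over to this argument unchanged.
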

See \cite{DemSmall} for a proof via Nadel vanishing theorem and an induction argument, \cite{MZ23-a} for a slight improvement, and \cite{MQZ26} for a generalization to higher direct images.
We also have the famous Bogomolov vanishing theorem \cite{Bog78,Bog80}, which can be seen as a conjugate version of Kawamata-Viehweg vanishing theorem.
\begin{theorem}[Bogomolov]
    Let $X$ be a complex projective manifold of dimension $n$ and let $L$ be a holomorphic line bundle over $X$. Then
    $$H^0(X,\Omega^p_X\otimes L^{-1})=0 \quad \text{ for } p<\kappa(L),$$
    where $\kappa(L)$ denotes the Kodaira--Iitaka dimension of $L$. Equivalently, we have
    $$H^n(X,\Omega^p_X\otimes L)=0 \quad \text{ for } p\geq n-\kappa(L)+1. $$
\end{theorem}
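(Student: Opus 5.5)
We follow Bogomolov's original strategy: a nonzero twisted form $s\in H^0(X,\Omega^p_X\otimes L^{-1})$ would force a large-dimensional family of functions to be ``orthogonal'' to it, which is incompatible with $p<\kappa(L)$.

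\textbf{Step 1: equivalence of the two formulations.} Serre duality gives
$$H^n(X,\Omega^p_X\otimes L)^\ast\cong H^0(X,\Omega^{n-p}_X\otimes L^{-1}),$$
and $p\geq n-\kappa(L)+1$ is the same as $n-p<\kappa(L)$. So it suffices to prove that $H^0(X,\Omega^p_X\otimes L^{-1})=0$ for $p<\kappa(L)$.

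\textbf{Step 2: set-up.} We may assume $\kappa(L)=k\geq 1$. Choose $m$ so that the rational map $\Phi=\Phi_{|mL|}\colon X\dashrightarrow \mathbb{P}^N$ has image of dimension $k$. Pick $t_0,\dots,t_k\in H^0(X,mL)$ whose ratios $f_i=t_i/t_0$ are algebraically independent. Then the form $df_1\wedge\cdots\wedge df_k$ is generically nonzero.

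Suppose $0\neq s\in H^0(X,\Omega^p_X\otimes L^{-1})$ with $p<k$. Then $s^{\otimes m}$ is a section of $(\Omega^p_X)^{\otimes m}\otimes L^{-m}$. For each $t\in H^0(X,mL)$, the contraction $t\cdot s^{\otimes m}$ is a holomorphic section of $(\Omega^p_X)^{\otimes m}$.

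\textbf{Step 3: the key lemma.} The aim is to show that for generic points, $s$ is ``divisible by'' forms pulled back from the image of $\Phi$. Work on a resolution, so that $\Phi$ becomes a morphism $X'\to Y$ with $\dim Y=k$, and reduce to the following statement.

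\emph{Lemma.} If $\sigma$ is a holomorphic $p$-form with values in $L^{-1}$, then its restriction to a general fibre $F$ of $\Phi$ vanishes in the relevant sense. More precisely, $s$ restricted to $F$ lies in $\Omega^p_F\otimes L^{-1}|_F$, and on $F$ the bundle $L$ is torsion-like. Alternatively, show that $s$ is the pullback of a form $\sigma$ on $Y$ with values in $\mathcal{O}(-1)$.

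The cleaner route, which is Bogomolov's, goes as follows. Consider the forms $\omega_t=d(t/t_0)$ and the wedge products $s\wedge \omega_{t}\wedge\cdots$. Using the holomorphic sections $t_0s,\dots,t_ks$, which lie in $\Omega^p\otimes L^{m-1}$ (replace $m$ suitably), one shows that $s\wedge df_{i_1}\wedge\cdots\wedge df_{i_{n-p}}$ is a holomorphic section of $K_X\otimes L^{-1}$ twisted by powers of $t_0$. Iterating over $t\in H^0(mL)$ then produces nonzero sections of $K_X\otimes L^{-r}$ with growth in $r$ incompatible with $h^0(K_X\otimes L^{-r})$. Alternatively, and more directly, one notes that the subsheaf $\mathcal{O}\cdot s\subset\Omega^p_X$ after saturation yields a rank-one subsheaf $L'\subset\Omega^p_X$ with $\kappa(L')\geq\kappa(L)$. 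The contradiction then comes from Bogomolov's inequality $\kappa(L')\leq p$ for invertible subsheaves of $\Omega^p_X$.

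\textbf{Step 4: the main obstacle.} The heart of the proof is this inequality $\kappa(L')\leq p$ for $L'\subset\Omega^p_X$. To prove it, take a local frame and write $s=\sum_I a_I\,dz_I$. For general $t\in H^0(mL')$, the section $t$ corresponds to $s^{\otimes m}$ multiplied by a meromorphic function $g_t$. Then $d g_t\wedge s=0$, by the argument of Bogomolov and Castelnuovo--de Franchis: differentiate $g_t s^{m}$ and use that $s$ is locally decomposable at generic points. Decomposability should follow from $s\wedge \iota_v s$-type identities forced by $t_0 s^m$ and $t_1 s^m$ both being holomorphic.

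Once we know $dg\wedge s=0$ for all such $g$, we get that $s$ is divisible by $dg_1\wedge\cdots\wedge dg_k$ with $k=\kappa(L')$. This is impossible if $k>p$. The decomposability step is where I expect the real difficulty, and I would first try to handle it via the foliation defined by the kernel of $s$, on which all the $g_t$ must be constant.
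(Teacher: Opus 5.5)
The paper gives no proof of this statement. It is quoted from Bogomolov's work, and the equivalence of the two formulations is Serre duality, which your Step 1 handles correctly (the same duality appears in the proof of Theorem~\ref{MainThm 3}). Your outer skeleton is also right. If $d(t/t_0)\wedge s=0$ for all $t,t_0\in H^0(X,L^{m})$, take $f_1,\dots,f_k$ algebraically independent with $k=\kappa(L)$. The covectors $df_i$ are then linearly independent at a general point and all annihilate $s$, so $s=df_1\wedge\cdots\wedge df_k\wedge\eta$ there, and $s\neq 0$ forces $p\geq k$.

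Step 3 does not move towards that identity:
\begin{itemize}
\item Vanishing of $s$ on general fibres of $\Phi$ is far weaker than divisibility by $df_1\wedge\cdots\wedge df_k$; every form $df_1\wedge\eta$ vanishes on fibres.
\item $s$ is in general not a pullback from $Y$.
\item The route through sections of $K_X\otimes L^{-r}$ is unsubstantiated. The wedges $s\wedge df_{i_1}\wedge\cdots\wedge df_{i_{n-p}}$ are exactly what vanish in the true proof, so they cannot be a source of ``nonzero sections''.
\item Saturating to $L'\subset\Omega^p_X$ and invoking $\kappa(L')\leq p$ is just a restatement of the theorem.
\end{itemize}

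So everything rests on Step 4, and there is a genuine gap: the identity $d(t/t_0)\wedge s=0$ is asserted but not proved, and the proposed mechanism fails. Holomorphicity of $t\,s^{\otimes m}$ is automatic and imposes no condition on $s$. Moreover, $s$ need not be decomposable at general points. Take $X=C\times A$, where $C$ is a curve of genus $\geq 2$ and $A$ is an abelian fourfold with flat coordinates $z_1,\dots,z_4$. Let $L=\mathrm{pr}_1^*\mathcal{O}_C(x)$, so $\kappa(L)=1$, and let $0\neq\sigma\in H^0(C,K_C(-x))$. Then $s=\sigma\wedge(dz_1\wedge dz_2+dz_3\wedge dz_4)$ is a section of $\Omega^3_X\otimes L^{-1}$ that is decomposable nowhere on $\{\sigma\neq 0\}$, and it is not a pullback from $C$ either. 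Any identity you could extract from holomorphicity of $t_0s^{\otimes m}$ and $t_1s^{\otimes m}$ holds equally in this example.

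What you are missing is the input that actually drives Castelnuovo--de Franchis: closedness of global holomorphic forms on a compact K\"ahler manifold. This is where compactness and Hodge theory enter, and your sketch never uses them. Bogomolov's argument extracts $m$-th roots:
\begin{itemize}
\item Pass to a resolved cover $\tilde Y\to X$ on which $t$ and $t_0$ acquire $m$-th roots $\tau,\tau_0$, which are sections of the pullback of $L$.
\item The pulled-back forms $\tau\,s$ and $\tau_0\,s$ are then global holomorphic $p$-forms on a compact K\"ahler manifold, hence $d$-closed.
\item Write $\tau s=g\,\tau_0 s$ with $g^m=t/t_0$. Closedness gives $dg\wedge\tau_0 s=0$, that is, $d(t/t_0)\wedge s=0$.
\end{itemize}
Alternatively, one can show $d(t_0^{1/m}s)=0$ directly on $X\setminus\{t_0=0\}$. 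Apply Stokes, with a cutoff, to the single-valued form $t_0^{1/m}s\wedge\overline{d(t_0^{1/m}s)}\wedge\omega^{n-p-1}$ for a K\"ahler form $\omega$, using that $d(t_0^{1/m})$ is locally square-integrable. With this step supplied, your Steps 1 and 2 and the final linear algebra give a complete proof.
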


We refer to \cite{Bou02,EV92,Gra15,LMNWZ25,Mou98,SS85,Wat23,Wat26,Wu20} for some generalizations of Bogomolov type vanishing theorems.
We recall a recent Kawamata-Viehweg type (resp. Bogomolov type) result in terms of numerical dimension for closed positive currents on compact K\"ahler manifolds obtained by Cao and Guan-Zhou \cite{Cao14,GZ15} (resp. Li-Meng-Ning-Wang-Zhou \cite{LMNWZ25}) for the reader's convenience and for completeness.
\begin{theorem}[Cao, Guan-Zhou, Li-Meng-Ning-Wang-Zhou]\label{Thm:Li-Meng-Ning-Wang-Zhou vanishing}
    Let $X$ be a compact K\"ahler manifold of dimension $n$. Let $(L,h)$ be a pseudo-effective line bundle on $X$ and let $\nd(\image\Theta_{L,h})$ denote the numerical dimension of $\image\Theta_{L,h}$.
    Then
    \begin{itemize}
        \item[(1)] $H^q(X,K_X\otimes L\otimes\mathcal{I}(h))=0 \text{ for } q\geq n-\nd(\image\Theta_{L,h})+1.$
        In particular, if $(L,h)$ is a big line bundle, then 
        $$H^q(X,K_X\otimes L\otimes\mathcal{I}(h))=0 \quad \text{ for } q\geq 1.$$
        \item[(2)] $H^n(X,\Omega^p_X\otimes L\otimes\mathcal{I}(h))=0 \text{ for } p\geq n-\nd(\image\Theta_{L,h})+1.$
        In particular, if $(L,h)$ is a big line bundle, then 
        $$H^n(X,\Omega^p_X\otimes L\otimes\mathcal{I}(h))=0 \quad \text{ for } p\geq 1.$$
    \end{itemize}
    
\end{theorem}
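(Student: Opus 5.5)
Since this theorem collects results already in the literature, the plan is to reduce each part to its known source and explain the reduction. For part (1), the statement is the Kawamata--Viehweg type vanishing for pseudo-effective line bundles with singular metrics of Cao and Guan--Zhou. I would follow Cao's strategy:
\begin{itemize}
\item approximate $h$ by metrics $h_\varepsilon$ with analytic singularities, via Demailly's regularization, such that $\mathcal{I}(h_\varepsilon)=\mathcal{I}(h)$;
\item use the strong openness property of Guan--Zhou to secure this equality of multiplier ideals;
\item observe that the curvature $\image\Theta_{L,h_\varepsilon}$ has at least $\nd(\image\Theta_{L,h})$ eigenvalues bounded below on a large set, up to a loss of $\varepsilon\omega$;
\item run the $L^2$ Bochner--Kodaira--Nakano estimates with these eigenvalue bounds for $(n,q)$-forms, which give vanishing once $q\geq n-\nd+1$.
\end{itemize}
The big case is immediate, because then $\nd=n$.

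For part (2), I would use Serre duality in a twisted form. Set $E=\Omega^p_X\otimes L\otimes\mathcal{I}(h)$. Then $H^n(X,E)$ is dual to $\mathrm{Hom}(E,K_X)$, that is, to $H^0(X,\mathcal{H}om(\Omega^p_X\otimes L\otimes\mathcal{I}(h),K_X))$. Off a codimension $\geq 2$ set, and using reflexivity, this group injects into $H^0(X,\Omega^{n-p}_X\otimes L^{-1}\otimes\mathcal{I}(h)^{\vee})$. More concretely, it consists of holomorphic $(n-p)$-forms $u$ with values in $L^{-1}$ such that $u\cdot f$ is holomorphic for every local section $f$ of $\mathcal{I}(h)$. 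So the claim becomes a Bogomolov type statement: there is no nonzero such $u$ when $n-p<\nd(\image\Theta_{L,h})$.

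To prove this, I would follow Boucksom's generalization of Bogomolov's theorem together with the argument of Li--Meng--Ning--Wang--Zhou. Given such a $u$, one builds the $(n-p,n-p)$-form $\image^{(n-p)^2}u\wedge\bar u$ with values in $L^{-1}\otimes\bar L^{-1}$. Measuring it with $h$ gives a positive current of the form $|u|^2_{h^{-1}}$-type. The condition $u\cdot\mathcal{I}(h)\subset\mathcal{O}$ keeps this current locally integrable, since it is comparable to $|u|^2e^{\varphi}$ and $e^{\varphi}$ is bounded.

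Next comes the key computation, $\ddbar$ of this $L^{-1}$-valued norm. It produces $-\image\Theta_{L,h}\wedge\image^{(n-p)^2}u\wedge\bar u$ plus a positive term. Wedging with suitable positive classes and integrating over $X$ shows that $T\wedge u\wedge\bar u=0$, where $T=\image\Theta_{L,h}$. The pointwise algebraic fact that a decomposable-type form $u\wedge\bar u$ annihilated by a current of numerical dimension $>n-p$ must vanish then forces $u=0$.

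I expect the main obstacle to be making this rigorous with singular $h$. Numerical dimension is defined through non-pluripolar products of regularizations, not pointwise. I would handle it exactly as in Boucksom's proof: pass to the absolutely continuous part, or to the regularized approximations with $\varepsilon$-losses, and use the fact that $\langle T^{k}\rangle\neq0$ for $k=\nd$ to get positivity of rank $\geq\nd$ on a set of positive measure. The remaining delicate point is the integrability, or equivalently the Stokes argument, for the $\mathcal{I}(h)$-twisted sections. Again, the big case is simply $\nd=n$.
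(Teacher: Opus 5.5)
The paper gives no proof of Theorem~\ref{Thm:Li-Meng-Ning-Wang-Zhou vanishing}. Part (1) is quoted from Cao and Guan--Zhou, and part (2) from Li--Meng--Ning--Wang--Zhou (Watanabe for the big case). Your plan for (1), Cao's regularization argument plus strong openness, matches the cited route.

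Your own sketch of (2), however, breaks down exactly where (2) goes beyond Boucksom's untwisted theorem: the twist by $\mathcal{I}(h)^\vee$.
By the Siu decomposition $\image\Theta_{L,h}=\sum_j\lambda_j[E_j]+R$ and \cite[Proposition~3.2]{DP03}, one has $\mathcal{I}(h)^\vee\cong\mathcal{O}_X(D)$ with $D=\sum_j\lfloor\lambda_j\rfloor E_j$. So an element of $H^0(X,\Omega^{n-p}_X\otimes L^{-1}\otimes\mathcal{I}(h)^\vee)$ is an $L^{-1}$-valued $(n-p)$-form $u$ with \emph{poles} along $D$. Taken literally, your description with $u$ holomorphic makes the condition $u\cdot\mathcal{I}(h)\subset\mathcal{O}_X$ empty and only treats $H^0(X,\Omega^{n-p}_X\otimes L^{-1})$.
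Boundedness of $e^{\varphi}$ then does not control $|u|^2e^{\varphi}$. What controls it is that $\varphi-\log|s_D|^2$ is quasi-psh.

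More seriously, the identity $\ddbar S=\pm\image\Theta_{L,h}\wedge S+(\text{positive})$ is false on $X$ across $D$. Take $n=p=1$, local weight $\varphi=\log|z|^2$ and $u=1/z$. Then $S\equiv 1$, hence $\ddbar S=0$, while $\image\Theta_{L,h}\cdot S$ is a Dirac mass at $0$.
The correct identity comes from viewing $u$ as a holomorphic section $v$ of $\Omega^{k}_X\otimes N^{-1}$, where $k=n-p$ and $N:=L\otimes\mathcal{O}_X(-D)$. Give $N$ the metric $h_N$ of local weight $\varphi-\log|s_D|^2$, whose curvature is $T_N:=\image\Theta_{L,h}-[D]\geq 0$. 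Then $S=\image^{k^2}\{v,v\}_{h_N^{-1}}$ and $\ddbar S=T_N\wedge S+\image^{(k+1)^2}\{D'v,D'v\}$.
The sign is $+$, not $-$ as you wrote, and it matters. With $+$, the equation $\int_X\ddbar S\wedge\omega^{n-k-1}=0$ forces both nonnegative terms to vanish. With your sign, the integration only equates two nonnegative numbers.

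Once corrected this way, the argument is simply Boucksom's theorem for $N$. This is exactly how the paper treats the closely related Theorem~\ref{MainThm 3}:
Serre duality gives $H^n(X,\Omega^p_X\otimes L\otimes\mathcal{I}(h))^*\cong H^0(X,\Omega^{n-p}_X\otimes N^{-1})$.
The bundle $N$ is pseudo-effective because $T_N$ represents $c_1(N)$.
Theorem~\ref{Theorem:Boucksom} then kills this group for $n-p<\nd(N)$.
What remains is $\nd(N)\geq\nd(\image\Theta_{L,h})$, the analogue of the inequality $\nd(c_1(N))\geq\nd(\{\alpha\})$ used there. It follows because non-pluripolar products put no mass on $D$ and are monotone in the singularity type.

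This route also disposes of your final step, which is not meaningful as stated. $\nd(\image\Theta_{L,h})$ is defined through regularizations with $\varepsilon\omega$-losses and non-pluripolar products, which need not be absolutely continuous. So it does not by itself produce a set of positive measure on which $T$ or $T_N$ has rank $>n-p$. Supplying that is precisely the content of Boucksom's theorem.
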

The definition of $\nd(\image\Theta_{L,h})$ is given in \cite{Cao14}. We omit the precise definition here since it is technical and will not be used in this paper.
Note that the case of $(L,h)$ being big in Theorem~\ref{Thm:Li-Meng-Ning-Wang-Zhou vanishing} (2) is given by Watanabe \cite{Wat23}.

The following Bogomolov type theorem can be regarded as an application of Theorem~\ref{MainThm 1}.
Sometimes we omit the factor $2\pi$ in this paper to simplify the expression.
\begin{theorem}\label{MainThm 2}
    Let $X$ be a projective algebraic manifold of dimension $n$ and let $L$ be a line bundle over $X$.
    Assume that $\{\alpha\}$ is a nef class on $X$ and $c_1(L)-\{\alpha\}=\{\theta+\ddbar\psi\}$, where $\theta$ is a smooth real $(1,1)$-form and $\psi$
    is a function on $X$ such that $\theta+\ddbar\psi\geq0$ in the sense of currents.
    Then
    $$H^n(X,\Omega^p_X\otimes L\otimes\mathcal{I}(\psi))=0 \quad\text{for } p\geq n-\nd(\{\alpha\})+1,$$
    where $\nd(\{\alpha\})$ is the numerical dimension of $\{\alpha\}$.
    In particular, if $L$ is a nef line bundle, then
    $$H^n(X,\Omega^p_X\otimes L)=0 \quad\text{for } p\geq n-\nd(L)+1.$$
\end{theorem}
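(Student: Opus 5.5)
The plan is to argue by induction on $n=\dim X$, cutting down by a general smooth member of a very ample linear system. The residue sequence for logarithmic forms makes this possible, and Theorem~\ref{MainThm 1} serves as the device that kills the error term. Set $\nu=\nd(\{\alpha\})$. If $\nu=0$ there is nothing to prove, since the range $p\geq n+1$ is empty. The case $\nu=n$ serves as the base of the reduction, and the case $0<\nu<n$ is handled by restriction to a hypersurface.

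\emph{Base case $\nu=n$.} Here $\{\alpha\}$ is nef and big, so it contains a Kähler current $\alpha+\ddbar\varphi\geq\varepsilon\omega$ with analytic singularities. We equip $L$ with the singular metric whose weight is $\psi+\delta\varphi$ and whose curvature is $\theta+\alpha+\ddbar(\psi+\delta\varphi)$, for a suitable smooth representative of $\{\alpha\}$. Because $\alpha$ is nef, this curvature is $\geq \delta\varepsilon\omega/2$ after adjusting the smooth representative, so $(L,h_\delta)$ is big. For $\delta$ small, the strong openness property (Guan--Zhou) together with Hölder's inequality gives $\mathcal{I}(\psi+\delta\varphi)=\mathcal{I}(\psi)$, using that $\varphi$ is bounded above. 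Then Theorem~\ref{Thm:Li-Meng-Ning-Wang-Zhou vanishing}(2) in the big case yields $H^n(X,\Omega^p_X\otimes L\otimes\mathcal{I}(\psi))=0$ for all $p\geq 1$, which is the claim when $\nu=n$.

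\emph{Inductive step, $1\leq\nu\leq n-1$.} Fix an ample $H$, apply Theorem~\ref{MainThm 1} to the coherent sheaf $\mathcal{F}=L\otimes\mathcal{I}(\psi)$, and choose a general smooth $A\in|mH|$ with $m\geq m_0(\mathcal{F})$. Tensoring the residue sequence
$$0\to\Omega^p_X\to\Omega^p_X(\log A)\to\Omega^{p-1}_A\to 0$$
with $L\otimes\mathcal{I}(\psi)$ remains exact, because $A$ avoids the associated points of $\mathcal{O}_X/\mathcal{I}(\psi)$, so the relevant $\mathcal{T}or$ vanishes. For general $A$, the restriction $\mathcal{I}(\psi)\otimes\mathcal{O}_A$ equals $\mathcal{I}(\psi|_A)$ by the generic restriction theorem for multiplier ideals. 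The long exact sequence then gives
$$H^{n-1}(A,\Omega^{p-1}_A\otimes L|_A\otimes\mathcal{I}(\psi|_A))\to H^n(X,\Omega^p_X\otimes L\otimes\mathcal{I}(\psi))\to H^n(X,\Omega^p_X(\log A)\otimes\mathcal{F}).$$
The last group vanishes by Theorem~\ref{MainThm 1}, since $p+n\geq n+1$. On $A$, which has dimension $n-1$, the hypotheses persist with $c_1(L|_A)-\{\alpha|_A\}=\{\theta|_A+\ddbar\psi|_A\}$, and $\{\alpha|_A\}$ is nef. Because $\nu\leq n-1$ and $A$ is a general ample hypersurface, $\nd(\{\alpha|_A\})=\nu$: indeed $\alpha^\nu\cdot H^{n-\nu}\neq0$ and $\alpha^{\nu+1}=0$ restrict correctly. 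The condition $p-1\geq(n-1)-\nu+1$ is equivalent to $p\geq n-\nu+1$, so the induction hypothesis kills the left group, and hence the middle one. Iterating, $\nu$ stays fixed while the dimension drops until $\dim=\nu$, which is the base case. The statement for nef $L$ follows by taking $\alpha=c_1(L)$, $\theta=0$ and $\psi=0$.

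The main obstacle I expect is the technical compatibility of $\mathcal{I}(\psi)$ with the hypersurface $A$. This means exactness after tensoring, the equality $\mathcal{I}(\psi)|_A=\mathcal{I}(\psi|_A)$ for general $A$, and the fact that $\psi|_A$ is not identically $-\infty$. All of these must hold simultaneously with the requirement $m\geq m_0(\mathcal{F})$. I would secure them by Bertini-type genericity in the very ample system $|mH|$, together with the restriction theorem for multiplier ideals (Ohsawa--Takegoshi gives the inclusion $\mathcal{I}(\psi|_A)\subset\mathcal{I}(\psi)|_A$, and genericity gives the reverse). A secondary delicate point is the openness argument in the big base case.
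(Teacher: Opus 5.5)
Your proposal is correct and follows essentially the same route as the paper: the case $\nd(\{\alpha\})=n$ perturbs $\psi$ by a small multiple of a Kähler-current potential, uses strong openness to keep $\mathcal{I}(\psi)$ unchanged, and applies the big case of Theorem~\ref{Thm:Li-Meng-Ning-Wang-Zhou vanishing}(2). The remaining cases go by induction on dimension via the residue sequence for a general $A\in|mH|$, Theorem~\ref{MainThm 1} applied to $L\otimes\mathcal{I}(\psi)$, the generic restriction theorem (Theorem~\ref{Thm:Restriction}), and $\nd(\{\alpha\}|_A)=\nd(\{\alpha\})$ when $\nd(\{\alpha\})\leq n-1$. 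One minor point: exactness of the tensored residue sequence already follows from torsion-freeness of $\mathcal{I}(\psi)$, while avoiding associated points is only needed to identify $\mathcal{I}(\psi)\otimes\mathcal{O}_A$ with $\mathcal{I}(\psi)\cdot\mathcal{O}_A$, which Xia's exact sequence gives anyway.
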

\begin{remark}
    Under the same assumptions as in Theorem~\ref{MainThm 2}, we obtained a Kawamata-Viehweg type vanishing theorem in \cite{MQZ26}. 
    As a special case of \cite[Theorem 1.9]{MQZ26}, we proved that $$H^q(X,K_X\otimes L\otimes\mathcal{I}(\psi))=0 \quad \text{for } q\geq n-\nd(\{\alpha\})+1. $$   
    This statement forms a conjugate counterpart to Theorem~\ref{MainThm 2}.
    
\end{remark}

In fact, Theorem~\ref{MainThm 2} can be strengthened to the following version.
\begin{theorem}\label{MainThm 3}
    Let $X$ be a compact K\"ahler manifold of dimension $n$ and let $L$ be a pseudo-effective line bundle over $X$.
    Assume that $\{\alpha\}$ is a pseudo-effective class on $X$ and $c_1(L)-\{\alpha\}=\{\theta+\ddbar\psi\}$, where $\theta$ is a smooth real $(1,1)$-form and $\psi$
    is a function on $X$ such that $\theta+\ddbar\psi\geq0$ in the sense of currents.
    Then
    $$H^n(X,\Omega^p_X\otimes L\otimes\mathcal{I}(\psi))=0 \quad\text{for } p\geq n-\nd(\{\alpha\})+1,$$
    where $\nd(\{\alpha\})$ is the numerical dimension of $\{\alpha\}$.
    
\end{theorem}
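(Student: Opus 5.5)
By Serre duality, $H^n(X,\Omega^p_X\otimes L\otimes\mathcal{I}(\psi))$ is dual to $\operatorname{Hom}(\Omega^p_X\otimes L\otimes\mathcal{I}(\psi),\mathcal{O}_X)$. On a compact Kähler manifold, for a quasi-coherent ideal, the latter injects into $H^0(X,\Omega^{n-p}_X\otimes K_X^{-1}\otimes\ldots)$. Concretely, I would reduce to showing the following: every twisted holomorphic $(n-p)$-form $u\in H^0(X,\Omega^{n-p}_X\otimes K_X\otimes K_X^{-1}\otimes L^{-1})$ that pairs into $\mathcal{I}(\psi)^{\vee}$ must vanish. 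Since $\mathcal{I}(\psi)\subset\mathcal{O}_X$ has a torsion-free quotient, a homomorphism $\Omega^p_X\otimes L\otimes\mathcal{I}(\psi)\to K_X$ extends over the codimension $\geq2$ cosupport of the ideal only after saturation. The cleaner route is therefore to consider the dual space as sections $v$ of $\Omega^{n-p}_X\otimes L^{-1}$ which are $L^2$ with respect to the weight $e^{\psi}$, i.e.\ $\int|v|^2e^{-\varphi_L+\psi}$-type integrability. This is the standard reformulation used in Boucksom's generalization of Bogomolov's theorem.

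The key step is to invoke Boucksom's Bogomolov-type vanishing \cite{Bou02}, in the form: if $T$ is a closed positive current in a class $\{\beta\}$ and $v$ is a holomorphic section of $\Omega^{q}_X\otimes L^{-1}$ such that $|v|^2_{h^{-1}}$ is locally integrable against $T$'s potential, then $q\geq\nd$ of that class. Apply this with $q=n-p$ and $L^{-1}$. First write $L=\{\alpha\}+\{\theta+\ddbar\psi\}$. Next, choose closed positive currents $T_\varepsilon\in\{\alpha\}$ with minimal singularities (approximately), using Demailly regularization, and equip $L$ with the singular metric of curvature $T_\varepsilon+\theta+\ddbar\psi$. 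Then the duality plus the multiplier ideal condition shows that $v$ defines a section of $\Omega^{n-p}_X\otimes L^{-1}$ whose pointwise norm $|v|^2e^{\psi}$, measured with a smooth metric on the $\{\alpha\}$-part, is integrable. Consequently, the rational map/foliation argument of Bogomolov--Boucksom gives $\nd(\{\alpha\})\leq n-p$, contradicting $p\geq n-\nd(\{\alpha\})+1$. I would carry this out by fixing the nonzero $v$ and considering the closed positive $(n-p,n-p)$-type form $\image^{(n-p)^2}v\wedge\bar v\, e^{\psi}$ twisted. Wedging with $\langle T^{k}\rangle$ for $k=\nd(\{\alpha\})$ (the non-pluripolar/movable product), the positivity computation in \cite{Bou02} shows that this wedge product must be zero when $k>n-p$ and nonzero otherwise.

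The main obstacle is the step where the weight $e^{\psi}$ and the class $\{\alpha\}$ are separated. Boucksom's argument needs an honest positive current in $c_1(L^{-1})$-dual form. Here only $\{\alpha\}$ carries the numerical dimension, while $\theta+\ddbar\psi\geq0$ is absorbed by the multiplier ideal. I would try first to show that $\int_X \langle T^k\rangle\wedge \image^{(n-p)^2} v\wedge\bar v\,e^{-\varphi_{T}}\cdot e^{-\varphi_L+\psi}$ is finite and $d$-closed in a suitable sense. This uses the fact that $v\wedge\bar v$ times the metric factor is a closed positive current (as $v$ is holomorphic and the extra weight is plurisubharmonic-compatible by $\theta+\ddbar\psi\geq0$). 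Then I would argue that its cohomology pairing with $\{\alpha\}^{k}$ with $k+ (n-p)>n$ vanishes for degree reasons, forcing $v\equiv0$ on the nonpluripolar locus. Pseudo-effectivity of $L$ is used only to make sense of the singular metrics and of $\nd$ via Boucksom's movable intersection products.
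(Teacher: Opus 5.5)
There is a genuine gap, and it is exactly where you abandoned saturation.

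\textbf{The dual space is misidentified.} Serre duality gives $H^n(X,\Omega^p_X\otimes L\otimes\mathcal{I}(\psi))^*\cong\operatorname{Hom}(\Omega^p_X\otimes L\otimes\mathcal{I}(\psi),K_X)=H^0(X,\Omega^{n-p}_X\otimes L^{-1}\otimes\mathcal{I}(\psi)^{\vee})$. The target is $K_X$, not $\mathcal{O}_X$. Here $\mathcal{I}(\psi)^{\vee}$ is the dual of the line bundle $M:=\mathcal{I}(\psi)^{\vee\vee}$.

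Take the Siu decomposition $\theta+\ddbar\psi=\sum_j\lambda_j[E_j]+R$. Then $\mathcal{I}(\psi)\subset\mathcal{O}_X(-\sum_j\lfloor\lambda_j\rfloor E_j)$, with equality outside an analytic set of codimension at least two \cite{DP03}. Hence $M=\mathcal{O}_X(-\sum_j\lfloor\lambda_j\rfloor E_j)$, and the dual space consists of twisted $(n-p)$-forms that may have poles along $\sum_j\lfloor\lambda_j\rfloor E_j$.

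Your reformulation as holomorphic sections of $\Omega^{n-p}_X\otimes L^{-1}$ with an integrability condition in $e^{\psi}$ is not this space. If the dual did sit inside $H^0(X,\Omega^{n-p}_X\otimes L^{-1})$, you would be done at once by Theorem~\ref{Theorem:Boucksom} applied to $L$. Indeed, $c_1(L)-\{\alpha\}$ pseudo-effective already gives $\nd(L)\geq\nd(\{\alpha\})$.

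\textbf{The missing idea: the poles are absorbed by the divisorial part of $\psi$.} Set $N:=L\otimes M$. The dual is then exactly $H^0(X,\Omega^{n-p}_X\otimes N^{-1})$. Moreover $c_1(N)-\{\alpha\}=\{\sum_j(\lambda_j-\lfloor\lambda_j\rfloor)[E_j]+R\}$ is pseudo-effective. So $N$ is pseudo-effective with $\nd(N)\geq\nd(\{\alpha\})$. Theorem~\ref{Theorem:Boucksom}, exactly as stated, gives vanishing for $n-p<\nd(N)$, in particular for $p\geq n-\nd(\{\alpha\})+1$. This is the paper's proof, and it needs no weighted variant of Boucksom's theorem.

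\textbf{Your weighted Boucksom argument does not work as sketched.} The version you invoke (local integrability of $|v|^2$ against a potential forces a bound by $\nd$) is not the statement of Theorem~\ref{Theorem:Boucksom} and would need its own proof. Your sketch of that proof fails at three points.
\begin{enumerate}
\item The form $\image^{(n-p)^2}v\wedge\bar v$ times a weight is positive, but it is not closed.
\item The concluding step is a non sequitur. An $(n-p,n-p)$-current wedged with a $(k,k)$-class, $k+(n-p)>n$, vanishes for every $v$ by bidegree alone, so it cannot force $v\equiv 0$.
\item That bidegree condition reads $k>p$, whereas your hypothesis is $\nd(\{\alpha\})>n-p$.
\end{enumerate}
Finally, any argument of this kind on the correct dual space must handle meromorphic $v$. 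The natural way is to note that $\psi-\sum_j\lfloor\lambda_j\rfloor\log|g_j|^2$ is still quasi-psh, where $g_j$ are local equations of $E_j$. This turns $v$ into a holomorphic section of $\Omega^{n-p}_X\otimes N^{-1}$, which returns you to the argument above.
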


This result is a corollary of the following Boucksom's generalization of Bogomolov's vanishing theorem.

\begin{theorem}[{\cite{Bou02}}]\label{Theorem:Boucksom}
    Let $X$ be a compact K\"ahler manifold and $L$ a pseudo-effective line bundle over $X$. 
    Then
    \begin{equation*}
        H^0(X,\Omega^p_X\otimes L^{-1})=0 \quad \text{for every } p<\nd(L),
    \end{equation*}
    where $\nd(L)$ denotes the numerical dimension of $L$.
\end{theorem}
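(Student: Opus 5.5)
The idea is to adapt Bogomolov's original argument to metrics with controlled negativity. Suppose, for a contradiction, that there is a nonzero section $s\in H^0(X,\Omega^p_X\otimes L^{-1})$ with $p<\nd(L)$. Fix a K\"ahler form $\omega$ on $X$.

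\textbf{Step 1: approximate metrics.} Since $L$ is pseudo-effective, Demailly's regularization gives, for every $\varepsilon>0$, singular metrics $h_\varepsilon=e^{-\varphi_\varepsilon}$ on $L$ with the following properties:
\begin{itemize}
\item they have analytic singularities;
\item their curvature satisfies $T_\varepsilon:=\ddbar\varphi_\varepsilon\geq-\varepsilon\omega$;
\item they are less singular than a current of minimal singularities.
\end{itemize}
The numerical dimension is defined via Boucksom's positive intersection products. So the hypothesis $\nd(L)\geq p+1$ means that $\langle c_1(L)^{p+1}\rangle\neq0$. Concretely, on the Zariski open set $U_\varepsilon$ where $\varphi_\varepsilon$ is smooth, there is $c>0$ with
$$\int_{U_\varepsilon}(T_\varepsilon+\varepsilon\omega)^{p+1}\wedge\omega^{n-p-1}\geq c$$
for all small $\varepsilon$.

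\textbf{Step 2: a Bochner-type identity.} Locally write $s=u\otimes e^{*}$, where $u$ is a holomorphic $p$-form and $e$ is a local frame of $L$. Then
$$\sigma_\varepsilon:=\image^{p^2}u\wedge\bar u\,e^{-\varphi_\varepsilon}$$
is a globally defined, closed-in-the-holomorphic-sense, positive $(p,p)$-form; the sign convention is chosen so that $\sigma_\varepsilon$ is the pointwise norm of $s$ with respect to $h_\varepsilon^{-1}$. On $U_\varepsilon$, since $u$ is holomorphic, one computes
$$\ddbar\sigma_\varepsilon=-T_\varepsilon\wedge\sigma_\varepsilon+(\text{a term }\image\,\partial\varphi_\varepsilon\wedge\bar\partial\varphi_\varepsilon\wedge\sigma_\varepsilon\text{ with the right sign}).$$
Because the weight $e^{-\varphi_\varepsilon}$ is bounded above, $\sigma_\varepsilon$ extends across the singularities. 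Integrating against $\omega^{n-p-1}$ and applying Stokes then yields
$$\int_X (T_\varepsilon+\varepsilon\omega)\wedge\sigma_\varepsilon\wedge\omega^{n-p-1}\leq C\varepsilon\int_X\sigma_\varepsilon\wedge\omega^{n-p-1}.$$
The sign bookkeeping must be checked carefully; if it comes out the other way, I would use the weight $e^{+\varphi_\varepsilon}$ with a Hessian-type inequality instead. Normalizing the right-hand side, this says that $(T_\varepsilon+\varepsilon\omega)\wedge s\wedge\bar s$ tends to $0$ in mass.

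\textbf{Step 3: linear algebra.} Let $T\geq0$ be a $(1,1)$-form and $s\neq0$ a $p$-form at a point. If $T\wedge s\wedge\bar s=0$, then $s$ is divisible by every $(1,0)$-form in the image of $T$, so $\operatorname{rank}T\leq p$ and hence $T^{p+1}=0$. The needed quantitative version reads
$$T^{p+1}\wedge\omega^{n-p-1}\leq C\,\frac{\bigl(T\wedge\image^{p^2}s\wedge\bar s\wedge\omega^{n-p-1}\bigr)\,\operatorname{tr}_\omega(T)^{p}}{|s|^2_\omega},$$
or some similar homogeneous inequality. Applied on the set where $|s|$ is bounded below, and combined with Step 2, it would force $\int(T_\varepsilon+\varepsilon\omega)^{p+1}\wedge\omega^{n-p-1}\to0$. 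This contradicts Step 1.

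\textbf{Main obstacle.} The hard part is Step 3 together with its global use, for two reasons. First, the factor $\operatorname{tr}_\omega(T_\varepsilon)^p$ is only controlled in mass, not pointwise, so H\"older's inequality does not close directly. Second, the small tubular neighbourhood of the zero locus of $s$ and the singular set of $\varphi_\varepsilon$ must be shown to carry negligible mass of $(T_\varepsilon+\varepsilon\omega)^{p+1}\wedge\omega^{n-p-1}$.

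To handle this, I would first try to use the monotonicity of positive products. The idea is to replace $T_\varepsilon$ by currents with \emph{bounded} potentials obtained via $\max(\varphi_\varepsilon,\varphi_{\min}-A)$, and use Bedford--Taylor theory so that the products do not charge pluripolar sets. The remaining issue would then reduce to a Chern--Levine--Nirenberg-type estimate near $\{s=0\}$.

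If this fails, a cleaner fallback is to saturate the image of $L\to\Omega^p_X$. When $s$ is decomposable this gives a foliation; in general one works with the associated Grassmannian section. One then argues by restricting to general complete-intersection subvarieties of dimension $p+1$, where $\Omega^p$ has corank $1$.
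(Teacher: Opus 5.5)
The paper gives no proof of this statement. It is quoted from Boucksom and used as a black box in the proof of Theorem~\ref{MainThm 3}, so your proposal has to stand on its own. It does not: the decisive step is missing, as you essentially acknowledge.

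\textbf{The gap.} Your pointwise inequality in Step 3 is fine. Let $\lambda_1\geq\dots\geq\lambda_n\geq 0$ be the eigenvalues of $T$ with respect to $\omega$ and $e_j$ an adapted coframe. Then $T\wedge\sigma\wedge\omega^{n-p-1}/\omega^n$ is a constant times $\sum_j\lambda_j|e_j\wedge s|^2\geq\lambda_{p+1}|s|^2$, and $\lambda_1\cdots\lambda_p\leq(\operatorname{tr}_\omega T)^p$. But Step 2 gives only mass information. Nothing in your construction prevents $(T_\varepsilon+\varepsilon\omega)^{p+1}\wedge\omega^{n-p-1}$ from concentrating where $\operatorname{tr}_\omega T_\varepsilon$ is large or $|s|$ is small.

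This is not a pluripolar phenomenon, so your proposed repairs do not address it:
\begin{itemize}
\item since $\varphi_\varepsilon$ is less singular than $\varphi_{\min}$, the truncation $\max(\varphi_\varepsilon,\varphi_{\min}-A)$ just gives back $\varphi_\varepsilon$ for $A$ large;
\item Bedford--Taylor theory and Chern--Levine--Nirenberg estimates only give mass bounds, which you already have;
\item complete intersections do not exist on a general compact K\"ahler manifold.
\end{itemize}

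The missing idea is to choose the metrics so that the top-degree Monge--Amp\`ere density is pointwise controlled, via the Calabi--Yau theorem. For $L$ nef, with $\alpha\in c_1(L)$ smooth, take $\alpha_\varepsilon=\alpha+\varepsilon\omega+\ddbar\psi_\varepsilon>0$ with $\alpha_\varepsilon^n=c_\varepsilon\omega^n$ and $\sup\psi_\varepsilon=0$. Then $c_\varepsilon\gtrsim\varepsilon^{n-\nd(L)}$; use the metric $h_0e^{-\psi_\varepsilon}$. Let now $\lambda_j$ be the eigenvalues of $\alpha_\varepsilon$, and set $F=\lambda_1\cdots\lambda_p$ and $G=\lambda_{p+1}|s|^2_\varepsilon$. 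Then:
\begin{itemize}
\item $\int F\,\omega^n\lesssim(c_1(L)+\varepsilon\omega)^p\cdot\omega^{n-p}=O(1)$;
\item $\int G\,\omega^n\lesssim\varepsilon$ by Step 2;
\item $c_\varepsilon|s|_\varepsilon^{2(n-p)}\leq FG^{n-p}$.
\end{itemize}
H\"older's inequality then gives
\[
c_\varepsilon^{1/(n-p+1)}\int_X|s|_\varepsilon^{2(n-p)/(n-p+1)}\omega^n\lesssim\varepsilon^{(n-p)/(n-p+1)}.
\]
The integral on the left is bounded below by compactness of sup-normalized quasi-psh functions. Hence $c_\varepsilon\lesssim\varepsilon^{n-p}$, which contradicts $\nd(L)\geq p+1$.

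For pseudo-effective $L$ you need the analogue in the big classes $c_1(L)+\varepsilon\omega$. First, you need currents with analytic singularities whose absolutely continuous part has Monge--Amp\`ere density $\gtrsim\mathrm{vol}(c_1(L)+\varepsilon\omega)\,\omega^n$. One could, e.g., solve a Monge--Amp\`ere equation on a resolution of a K\"ahler current in that class with analytic singularities and almost maximal volume. Second, you need
\[
\mathrm{vol}(c_1(L)+\varepsilon\omega)\gtrsim\varepsilon^{n-\nd(L)}\langle c_1(L)^{\nd(L)}\rangle\cdot\omega^{n-\nd(L)},
\]
which follows from super-additivity of positive products. This is where the positive-product machinery genuinely enters.

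\textbf{Two further errors.} Step 1 is false as stated. Being less singular than a current with minimal singularities in $c_1(L)$ does not keep the absolutely continuous masses away from $0$. Take the Demailly--Peternell--Schneider ruled surface over an elliptic curve and $L=\mathcal{O}(C)$, with $p=0$:
\begin{itemize}
\item $L$ is nef with $\nd(L)=1$;
\item the only positive current in $c_1(L)$ is $[C]$;
\item Demailly's regularizations of $[C]$ have absolutely continuous parts whose mass tends to $0$.
\end{itemize}
You must instead use currents with (nearly) minimal singularities in $c_1(L)+\varepsilon\omega$, together with monotonicity of non-pluripolar products.

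In Step 2, the norm of $s$ for the dual metric carries the weight $e^{+\varphi_\varepsilon}$, not $e^{-\varphi_\varepsilon}$. The correct identity is
\[
\ddbar\sigma_\varepsilon=\ddbar\varphi_\varepsilon\wedge\sigma_\varepsilon+e^{\varphi_\varepsilon}\image^{(p+1)^2}v\wedge\bar{v},\qquad v=\partial u+\partial\varphi_\varepsilon\wedge u.
\]
Note that $u$ need not be closed. It is the boundedness of $e^{\varphi_\varepsilon}$, not of $e^{-\varphi_\varepsilon}$, that lets you cross the poles. With this correction your integrated inequality does hold.
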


The remaining part of this paper is organized as follows. 
In Section~\ref{Section:Preliminary}, we briefly recall some preliminary results about singular Hermitian metrics and sheaves of logarithmic differential forms. 
In Section~\ref{Section:Serre Vanishing}, we prove Theorem~\ref{MainThm 1}.
Then in Section~\ref{Section:Bogomolov Vanishing}, we prove Theorem~\ref{MainThm 2} and Theorem~\ref{MainThm 3}.

\section{Preliminaries} \label{Section:Preliminary}

In this section, we recall some basic results used in this note.

\subsection{Singular Hermitian metrics on line bundles}

Let $(X,\omega)$ be a Hermitian manifold of dimension $n$.
A singular Hermitian metric $h$ on a holomorphic line bundle $L\to X$ is simply a Hermitian metric which can be expressed locally as
$e^{-\varphi_U}$ on $U$ such that $\varphi_U$ is $L^1_\loc$, where
$U\subset X$ is a local coordinate chart such that $L|_U\simeq U\times\mathbb{C}.$
It has a well-defined curvature current $\image\Theta_{L,h}:=\ddbar\varphi_U.$

\begin{definition}
    A function $\varphi:X\rightarrow [-\infty,+\infty)$ is
    said to be quasi-plurisubharmonic (quasi-psh for short) if $\varphi$ is locally the sum of a plurisubharmonic function and a smooth function 
    (or equivalently, if $\ddbar\varphi$ is locally bounded from below). 

    If $\varphi$ is a quasi-psh function on $X$, the multiplier ideal sheaf $\mathcal{I}(\varphi)$ is the ideal subsheaf of $\mathcal{O}_X$ defined by
    $$\mathcal{I}(\varphi)_x=\{f\in\mathcal{O}_{X,x}~|~\exists\ U\ni x\  \text{such that}\ \int_U|f|^2e^{-\varphi}d\lambda<+\infty  \},$$
    where $U$ is an open coordinate neighborhood of $x$ and $d\lambda$ is the standard Lebesgue measure in $\mathbb{C}^n.$
    
    It is easy to see that associated to a singular Hermitian metric $h$ on $L$ satisfying
    $\image\Theta_{L,h}\geq\gamma$ 
    for some smooth real $(1,1)$-form $\gamma$ in the sense of currents, 
    there is a well-defined multiplier ideal sheaf $\mathcal{I}(h)$ on $X$ and $\mathcal{I}(h)$ is coherent (\cite{Nad90}).
\end{definition}
The following strong openness property of multiplier ideal sheaves plays an essential role in the proof of Theorem~\ref{MainThm 2}.

\begin{theorem}[{\cite[Theorem 1.1]{GZ15}}] \label{Thm:strong openness}
    Let $\varphi$ be a negative plurisubharmonic function on $\Delta^n\subset \mathbb{C}^n$, and let $\varphi_0\not\equiv-\infty$ be a negative plurisubharmonic function on $\Delta^n$. 
    Then $\mathcal{I}(\varphi)=\bigcup_{\varepsilon>0}\mathcal{I}(\varphi+\varepsilon\varphi_0)$.
\end{theorem}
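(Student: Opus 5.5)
The plan has two stages. First, reduce the statement to the \emph{strong openness} conjecture in its original form, $\mathcal{I}(\varphi)=\bigcup_{p>1}\mathcal{I}(p\varphi)$. Second, prove that form with an $L^2$ extension theorem that has an optimal constant.

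\textbf{Reduction.} Since $\varphi_0<0$, we have $\varphi+\varepsilon\varphi_0\leq\varphi$, so $\mathcal{I}(\varphi+\varepsilon\varphi_0)\subset\mathcal{I}(\varphi)$ and only the reverse inclusion needs proof. Fix $F\in\mathcal{I}(\varphi)_o$, and suppose we already know $F\in\mathcal{I}(p\varphi)_o$ for some $p>1$. Let $q$ be the conjugate exponent. Writing $|F|^2=|F|^{2/p}|F|^{2/q}$ and applying Hölder gives
$$\int_U|F|^2e^{-\varphi-\varepsilon\varphi_0}\leq\Big(\int_U|F|^2e^{-p\varphi}\Big)^{1/p}\Big(\int_U|F|^2e^{-q\varepsilon\varphi_0}\Big)^{1/q}.$$
Because $\varphi_0\not\equiv-\infty$, its Lelong numbers are bounded on a neighbourhood of $o$. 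Skoda's integrability theorem therefore makes $e^{-q\varepsilon\varphi_0}$ locally integrable once $\varepsilon$ is small, and the right-hand side is finite. So everything reduces to proving $\mathcal{I}(\varphi)=\bigcup_{p>1}\mathcal{I}(p\varphi)$.

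\textbf{Strong openness by contradiction.} Suppose $F\in\mathcal{I}(\varphi)_o$ but $F\notin\mathcal{I}(p\varphi)_o$ for every $p>1$. We may assume $\varphi<0$ on a small polydisc. For $t\geq0$, set
$$G(t)=\inf\Big\{\int_{\{\varphi<-t\}}|f|^2\,d\lambda : f\in\mathcal{O}(\{\varphi<-t\}),\ (f-F)_o\in\mathcal{I}_+(\varphi)_o\Big\},$$
where $\mathcal{I}_+(\varphi)_o=\bigcup_{p>1}\mathcal{I}(p\varphi)_o$. This ideal is coherent and is a proper enlargement-free neighbour of $\mathcal{I}(\varphi)_o$, which is what lets the contradiction hypothesis be used.

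Two estimates are needed, and they pull against each other.

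\emph{Upper bound.} Taking $f=F$ gives
$$G(t)\leq\int_{\{\varphi<-t\}}|F|^2\leq e^{-t}\int_{\{\varphi<-t\}}|F|^2e^{-\varphi}=o(e^{-t})\quad\text{as } t\to\infty,$$
since $|F|^2e^{-\varphi}$ is integrable and the sets $\{\varphi<-t\}$ shrink to a null set.

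\emph{Lower bound.} We want $G(t)\geq c\,e^{-t}G(0)$ with $c>0$ independent of $t$. The argument compares the minimizer on $\{\varphi<-t\}$ with the one on $\{\varphi<0\}$. It uses the Ohsawa--Takegoshi type $L^2$ extension with optimal (or at least $t$-uniform) constant, in the Guan--Zhou form, with weights built from $\max(\varphi,-t)$. This produces, from the minimizer on the smaller sublevel set, a holomorphic function on the larger set with the same germ modulo $\mathcal{I}_+(\varphi)_o$, whose $L^2$ norm is controlled by $e^{t}$ times the smaller norm. The effect is to show that $e^{t}G(t)$ is bounded below, in fact nondecreasing in $t$ after the substitution $r=e^{-t}$, which is a concavity-type statement.

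\emph{Nondegeneracy.} We also need $G(0)>0$. The closedness of the affine space $F+\mathcal{I}_+(\varphi)_o$ under $L^2$ limits (Montel plus coherence, i.e.\ the strong Noetherian property) shows that $G(0)=0$ would force $F\in\mathcal{I}_+(\varphi)_o$. That is excluded by assumption, so $G(0)>0$.

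Combining the upper bound $G(t)=o(e^{-t})$ with $G(t)\geq c\,e^{-t}G(0)$ and $G(0)>0$ gives the contradiction.

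The main obstacle is the lower bound. Obtaining an extension/approximation estimate on the sublevel sets $\{\varphi<-t\}$ whose constant does not blow up with $t$ is exactly where the optimal $L^2$ extension theorem is indispensable. If that route proves too delicate, the fallback is an induction on the dimension $n$, restricting to generic hyperplanes through $o$ via Ohsawa--Takegoshi, in the spirit of Hiep's and Lempert's proofs.
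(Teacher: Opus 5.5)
The paper does not prove this statement; it quotes it from \cite{GZ15}, where it is derived from the Guan--Zhou $L^2$ extension theorem on sublevel sets of a plurisubharmonic function, via a contradiction argument of the kind you sketch. Several of your steps are correct. The reduction works: H\"older plus Skoda gives the $\varphi_0$-form from $\mathcal{I}(\varphi)=\bigcup_{p>1}\mathcal{I}(p\varphi)$, and $\varphi_0=\varphi$ gives the converse. The upper bound $G(t)=o(e^{-t})$ and the bound $G(0)>0$ are also fine. Note, though, that closedness of $F+\mathcal{I}_+(\varphi)_o$ under locally uniform limits is Krull's intersection theorem applied to jets, not the strong Noetherian property. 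You should also say why $G(t)$ is defined. If $\varphi(o)>-\infty$, then $\nu(\varphi,o)=0$ and Skoda gives $\mathcal{I}_+(\varphi)_o=\mathcal{O}_o$. So you may assume $\varphi(o)=-\infty$, and then $o\in\{\varphi<-t\}$ for every $t$.

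The genuine gap is the lower bound, which is the entire content of the theorem; the mechanism you describe does not yield it. In the sublevel-set extension lemma, the germ condition on $\tilde F-f_t$ comes from the factor $e^{-\Psi}$, where $\Psi$ is the function whose sublevel sets are used. With $\Psi=\varphi$ (your sets $\{\varphi<-t\}$) you only get $(\tilde F-f_t)_o\in\mathcal{I}(\varphi)_o$, and the bounded function $\max(\varphi,-t)$ contributes nothing to the ideal. Since $F\in\mathcal{I}(\varphi)_o$ already, this does not make $\tilde F$ a competitor for $G(0)$, which requires $(\tilde F-F)_o\in\mathcal{I}_+(\varphi)_o$.

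To land in $F+\mathcal{I}_+(\varphi)_o$ you must take $\Psi=p\varphi$ with $p>1$. Applying the lemma to $f_t$ on $\{p\varphi<-pt\}=\{\varphi<-t\}$ gives $(\tilde F-f_t)_o\in\mathcal{I}(p\varphi)_o$, but only $\int_D|\tilde F|^2\leq Ce^{pt}\int_{\{\varphi<-t\}}|f_t|^2$, which has the wrong exponent. Two facts close the argument, and neither appears in your proposal.
\begin{enumerate}
\item[(i)] Noetherian stabilization: $\mathcal{I}(p\varphi)_o=\mathcal{I}_+(\varphi)_o$ for all $p\in(1,p_0]$. This, not ``enlargement-free neighbour'', is the property of $\mathcal{I}_+$ you need, and it makes every such $\tilde F$ admissible.
\item[(ii)] The constant $C$ is independent of both $t$ and $p$, so letting $p\to1^+$ at fixed $t$ gives $G(0)\leq Ce^{t}G(t)$.
\end{enumerate}
Once (i) and (ii) are in place, an optimal constant is unnecessary; uniformity suffices. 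The monotonicity of $e^{t}G(t)$ you mention is the sharp concavity property of minimal $L^2$ integrals, which does require the optimal constant but is not needed here. You can even dispense with $G(t)$: applying the lemma directly to $F$ gives $G(0)\leq Ce^{t}\int_{\{\varphi<-t\}}|F|^2\leq C\int_{\{\varphi<-t\}}|F|^2e^{-\varphi}\to0$, which contradicts $G(0)>0$.
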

We need the following restriction formula obtained in \cite[Theorem 1.1]{Xia22} (see also \cite{FM21,MZ23-b}) for the hyperplane argument in the proof of Theorem~\ref{MainThm 2}.
\begin{theorem}[Xia]\label{Thm:Restriction}
    Let $L$ be a line bundle on a projective manifold $X$
    and $h$ a singular metric on $L$ whose local weight $\varphi$ is quasi-psh. 
    Given any base-point free linear system $\Lambda$ on $X$, there is a pluripolar set $\Sigma\subset\Lambda$  such that for all $H\in\Lambda\setminus\Sigma$, $H$ is smooth and the following sequence
    $$0\rightarrow \mathcal{I}(h)\otimes\mathcal{O}(-H)\rightarrow\mathcal{I}(h)\rightarrow\mathcal{I}(h|_H)\rightarrow 0$$
    is exact.
\end{theorem}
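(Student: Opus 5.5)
The plan is to check exactness at each spot of
$$0\rightarrow \mathcal{I}(h)\otimes\mathcal{O}(-H)\rightarrow\mathcal{I}(h)\rightarrow\mathcal{I}(h|_H)\rightarrow 0$$
separately. For each spot we throw into $\Sigma$ a bad set of divisors that is either a proper Zariski closed subset of $\Lambda$ or pluripolar. Injectivity of the first arrow is trivial.

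\textbf{Bertini step.} Since $\Lambda$ is base-point free, Bertini gives a proper Zariski closed subset of $\Lambda$ off which $H$ is smooth. We also put into $\Sigma$ the divisors $H$ on which the weight $\varphi$ is identically $-\infty$ on some component. These $H$ lie in the polar set of $\varphi$, and they form a pluripolar subset of $\Lambda$. Off this set, $h|_H$ is a well-defined singular metric with quasi-psh weight, so $\mathcal{I}(h|_H)$ makes sense.

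\textbf{Exactness in the middle.} Let $t$ be a local equation of $H$. The kernel of $\mathcal{I}(h)\to\mathcal{O}_H$ is $\mathcal{I}(h)\cap t\mathcal{O}_X = t\cdot(\mathcal{I}(h):t)$. This equals $t\,\mathcal{I}(h)$ as soon as $t$ is a nonzerodivisor on $\mathcal{O}_X/\mathcal{I}(h)$. That holds when $H$ contains none of the finitely many associated subvarieties of $\mathcal{O}_X/\mathcal{I}(h)$; these are finitely many by coherence and compactness. Because $\Lambda$ is base-point free, the divisors containing a given subvariety form a proper linear subspace of $\Lambda$. So this step only removes a Zariski closed, hence pluripolar, set.

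\textbf{Surjectivity onto $\mathcal{I}(h|_H)$, via the inclusion $\mathcal{I}(h|_H)\subset \mathcal{I}(h)\cdot\mathcal{O}_H$.} This is the Ohsawa--Takegoshi extension theorem, applied locally on a Stein coordinate ball. A germ $f$ on $H$ with $\int_H|f|^2e^{-\varphi}<\infty$ extends to a germ $F$ on $X$ with $\int|F|^2e^{-\varphi}<\infty$. Since the weight is only quasi-psh, we first add a smooth strictly psh correction to make it psh; this does not change the multiplier ideals.

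\textbf{The reverse inclusion $\mathcal{I}(h)|_H\subset\mathcal{I}(h|_H)$.} This is the genuinely delicate part and the main obstacle, because it fails for special $H$. I would argue as follows.
\begin{itemize}
\item Fix finitely many local generators $f_j$ of $\mathcal{I}(h)$ on a finite cover. By the strong openness theorem (Theorem~\ref{Thm:strong openness}), each $f_j$ lies in $\mathcal{I}((1+\varepsilon)\varphi)$ for some $\varepsilon>0$.
\item Parametrize $\Lambda$ locally and integrate the slice integrals over $H$. The Fubini / coarea formula gives
$$\int_{\Lambda}\Big(\int_{H\cap U}|f_j|^2e^{-(1+\varepsilon)\varphi}\Big)\,d\mu(H)<\infty .$$
Hence the inner integral is finite for almost every $H$. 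The extra factor $1+\varepsilon$ gives room, via H\"older, to control neighborhoods.
\item To upgrade ``almost every'' to ``outside a pluripolar set'', I would show that
$$H\mapsto -\log\int_{H\cap U}|f_j|^2e^{-\varphi}$$
is (locally) plurisubharmonic in the parameter. This uses Berndtsson's positivity of direct images, or the Guan--Zhou log-plurisubharmonicity of Bergman kernels for psh families. The function is finite almost everywhere, so its $-\infty$ locus, where the integral diverges, is pluripolar.
\item Taking the countable union over generators, charts and the needed shrinkings of $U$ keeps the exceptional set pluripolar. Outside it, every generator restricts into $\mathcal{I}(h|_H)$.
\end{itemize}

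Combining the pieces gives the statement with $\Sigma$ pluripolar. I expect the plurisubharmonic dependence on the parameter to be the hardest point, since it is what turns the measure-theoretic statement into a pluripolarity statement.
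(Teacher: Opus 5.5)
\textbf{There is a genuine gap in the pluripolarity step.}

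The paper does not prove this statement; it quotes it from [Xia22]. The nontrivial content of that result is that $\Sigma$ can be taken \emph{pluripolar}, not merely Lebesgue-null.

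Most of your outline is sound: the Bertini step, the associated-subvariety argument for exactness in the middle, Ohsawa--Takegoshi for $\mathcal{I}(h|_H)\subset\mathcal{I}(h)\cdot\mathcal{O}_H$, and Fubini for the reverse inclusion for almost every $H$. The pluripolarity of $\{H:\varphi|_H\equiv-\infty\}$ still needs a short argument, e.g.\ via the fibre integral $H\mapsto\int_H\varphi\,\omega^{n-1}$. Together these prove the statement with an exceptional set of measure zero. That is in fact all the proof of Theorem~\ref{MainThm 2} uses, since only one suitable $A$ is needed there.

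\textbf{The missing step.} What fails is the passage from ``almost every'' to ``outside a pluripolar set''. The function $H\mapsto-\log\int_{H\cap U}|f|^2e^{-\varphi}$ is not plurisubharmonic in general. Locally take the pencil $H_t=\{z_2=t\}$ (e.g.\ the fibres of $\mathbb{P}^1\times\mathbb{P}^1\to\mathbb{P}^1$), $U$ the unit bidisc, $f=1$ (the generator of $\mathcal{I}(h)=\mathcal{O}_X$), and the pluriharmonic weight $\varphi=2\,\mathrm{Re}(z_1z_2)$. Then
\[
I(t)=\int_{|z_1|<1}|e^{-tz_1}|^2\,d\lambda(z_1)=\pi\sum_{k\geq0}\frac{|t|^{2k}}{(k!)^2(k+1)},
\]
so $\log I(t)=\log\pi+\tfrac12|t|^2+O(|t|^4)$ and $-\log I$ is strictly superharmonic near $t=0$. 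With $\varphi=|z_2|^2$ instead one gets $\log I=\log\pi-|t|^2$, so $\log I$ has no sign either.

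Berndtsson's theorem and the Guan--Zhou results do not say otherwise. They give plurisubharmonicity of $\log$ of the fibrewise Bergman kernel, i.e.\ of $\log$ of dual norms of holomorphic families of \emph{functionals} on the positively curved $L^2$ direct image. For a fixed holomorphic section $f$, neither $\log\|f\|_t^2$ nor $-\log\|f\|_t^2$ need be psh. Without this step you only know that the bad set is Lebesgue-null, and a null set (e.g.\ a real hypersurface in $\Lambda$) need not be pluripolar.

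\textbf{What is needed.} You need a genuinely (quasi-)psh function on $\Lambda$, not identically $-\infty$, whose $-\infty$ locus contains the bad set. Positivity has to enter from the dual side. One route:
\begin{itemize}
\item Work on the incidence family $\{(x,H):x\in H\}\to\Lambda$, which has compact fibres.
\item The direct image of the relative canonical bundle, twisted by $L\otimes(\text{ample})$ and the multiplier ideal, carries a positively curved $L^2$ metric (Berndtsson--Paun, Paun--Takayama).
\item In a local frame, $-\log\det$ of that metric is psh, and it is not $\equiv-\infty$ by your Fubini step.
\item Its $-\infty$ locus contains every $H$ on which some global generator of $\mathcal{I}(h)$ (after an ample twist) has infinite $L^2$ norm on $H$.
\end{itemize}
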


We now recall some definitions of positivity of line bundles (see \cite{DemSmall}).
\begin{definition}
    Let $(X,\omega)$ be a compact K\"ahler manifold of dimension $n$ and $\{\alpha\}\in H^{1,1}(X,\mathbb{R})$ a $(1,1)$-class on $X$. Let $L$ be a line bundle on $X$.
    \begin{itemize}
        \item[(1)] $\{\alpha\}$ is said to be nef if for every $\varepsilon>0$, 
        there exists a smooth function $\varphi_\varepsilon$ on $X$ such that $\alpha+\ddbar \varphi_\varepsilon\geq-\varepsilon\omega$.
        One defines the numerical dimension of $\{\alpha\}$ to be
        $$\nd(\{\alpha\})=\max\{k=0,\ldots,n~|~\{\alpha\}^k\neq0 \text{ in } H^{2k}(X,\mathbb{R})\}.$$
        $L$ is said to be nef if $c_1(L)$ is a nef class.
        In this case, we define its numerical
        dimension by $\nd(L)=\nd(c_1(L))$.
        \item[(2)] $\{\alpha\}$ is said to be pseudo-effective if there is a quasi-psh function $\varphi$ such that $\alpha+\ddbar\varphi\geq 0$ in the sense of currents.
        One defines the numerical dimension of $\{\alpha\}$ to be
        $$\nd(\{\alpha\})=\max\{k=0,\ldots,n~|~\langle \{\alpha\}^k \rangle\neq0 \}.$$
        Here $\langle \{\alpha\}^k \rangle:=\lim\limits_{\delta\to0}\{\langle T^p_{\min,\delta\omega} \rangle\}$ 
        where $T^p_{\min,\delta\omega}$ is the positive current with minimal singularity in the class $\{\alpha+\delta\omega\}$ and $\langle T^p_{\min,\delta\omega} \rangle$ is the non-pluripolar product.
        We refer to \cite{BEGZ10} for more precise argument.
        $L$ is said to be pseudo-effective if $c_1(L)$ is a pseudo-effective class. 
        In this case, we define its numerical
        dimension by $\nd(L)=\nd(c_1(L))$, and there is a singular Hermitian metric $h$ on $L$ such that $\image\Theta_{L,h}\geq 0$ in the sense of currents. 
        We say that $(L,h)$ is pseudo-effective.
        \item[(3)] $\{\alpha\}$ is said to be big if there is a quasi-psh function $\varphi$ such that $\alpha+\ddbar\varphi\geq \varepsilon\omega$ for some positive continuous function $\varepsilon$ in the sense of currents.
        $L$ is said to be big if $c_1(L)$ is a big class. 
        In this case, there is a singular Hermitian metric $h$ on $L$ such that $\image\Theta_{L,h}\geq \varepsilon\omega$ in the sense of currents. 
        We say that $(L,h)$ is big.
        Note that a nef line bundle is big if its numerical dimension is $n$.
    \end{itemize}
\end{definition}
We remark that if $\{\alpha\}$ is a nef class on a projective manifold $X$ with $\nd(\{\alpha\})\leq\dim X-1$ and $S\subset X$ is a smooth ample divisor, then $\{\alpha\}|_S$ is also nef and $\nd(\{\alpha\}|_S)=\nd(\{\alpha\})$. 
See \cite[Proposition 6.21]{DemSmall} for a proof.
One can also define the numerical dimension of any class $\{\alpha\}\in H^{1,1}(X,\RR)$ (for example, see \cite[Section 19]{DemSmall}).

\subsection{Sheaf of logarithmic differential forms}
We recall here some basic properties of the sheaf of logarithmic differential forms (see \cite[Chapter 2]{EV92}).
A reduced divisor $D=\sum_{i=1}^{s}D_i$ on a projective manifold $X$ is called a simple normal crossing divisor if every irreducible component $D_j$ is smooth and all intersections are transverse.
The sheaf $\Omega_X^p(\log D)$ is the sheaf of germs of differential $p$-forms on $X$ with at most logarithmic poles along $D$.
Sections of the logarithmic sheaf $\Omega_X^p(\log D)$ on an open subset $U$ are 
$$\Gamma(U,\Omega_X^p(\log D)):=\{\alpha\in\Gamma(U,\Omega_X^p\otimes \mathcal{O}_X(D)) ~|~d\alpha\in\Gamma(U,\Omega_X^{p+1}\otimes \mathcal{O}_X(D)) \}.$$
It is easy to see that when $p=\dim X$, $\Omega_X^p(\log D)=K_X\otimes\mathcal{O}_X(D)$.
\begin{proposition}\label{Prop:Exact sequence} 
    Let $D=\sum_{i=1}^{s}D_i$ be a reduced simple normal crossing divisor on a projective manifold $X$ of dimension $n$.
    Then the following properties hold.
    \begin{enumerate}
        \item The sheaf $\Omega_X^p(\log D)$ is locally free.
        \item One has three exact sequences:
        \begin{itemize}
            \item[(a)] $$0\rightarrow \Omega_X^1\rightarrow\Omega_X^1(\log D)\rightarrow \bigoplus_{i=1}^s\mathcal{O}_{D_i}\rightarrow 0.$$
            \item[(b)] $$0\rightarrow\Omega_X^p(\log(D-D_1))\rightarrow \Omega_X^p(\log D)\rightarrow \Omega_{D_1}^{p-1}(\log(D-D_1)|_{D_1})\rightarrow 0.$$
            \item[(c)] $$0\rightarrow\Omega_X^p(\log D)(-D_1)\rightarrow\Omega_X^p(\log(D-D_1))\rightarrow\Omega^p_{D_1}(\log(D-D_1)|_{D_1})\rightarrow0.$$
        \end{itemize}
        \item Let $L$ be a line bundle on $X$. Logarithmic Serre duality gives an isomorphism
        $$H^q(X,\Omega_X^p(\log D)\otimes L)\cong H^{n-q}(X,\Omega_X^{n-p}(\log D)\otimes L^{-1}\otimes \mathcal{O}_X(D)^{-1})^*.$$
    \end{enumerate}
\end{proposition}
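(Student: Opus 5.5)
The proof is local except for part (3), which is Serre duality plus a perfect pairing. Throughout, I would work in a coordinate chart $U$ with coordinates $z_1,\dots,z_n$ adapted to $D$, so that $D\cap U=\{z_1\cdots z_k=0\}$, $D_i\cap U=\{z_i=0\}$ for $i\leq k$, and $D_1=\{z_1=0\}$ whenever $D_1$ meets $U$.

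\textbf{Step 1: local freeness.} I claim that $\Omega^p_X(\log D)|_U$ is free with basis
$$\frac{dz_{I}}{z_{I}}\wedge dz_J,\qquad I\subset\{1,\dots,k\},\ J\subset\{k+1,\dots,n\},\ |I|+|J|=p,$$
where $dz_I/z_I=\bigwedge_{i\in I}dz_i/z_i$. Equivalently, $\Omega^p_X(\log D)=\bigwedge^p\Omega^1_X(\log D)$, and $\Omega^1_X(\log D)$ has basis $dz_1/z_1,\dots,dz_k/z_k,dz_{k+1},\dots,dz_n$.

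The inclusion of this span is immediate, since each basis element $\beta$ satisfies $z_1\cdots z_k\beta$ holomorphic and $d\beta=0$.

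For the converse I would take $\alpha$ with $z_1\cdots z_k\,\alpha$ and $z_1\cdots z_k\,d\alpha$ holomorphic and argue one divisor at a time. Write
$$\alpha=\frac{dz_1}{z_1}\wedge\beta+\frac{\gamma}{z_1},$$
where $\beta,\gamma$ have poles only along $z_2\cdots z_k$ and $\gamma$ contains no $dz_1$. The term $d\gamma/z_1$ contributes a simple pole along $z_1=0$. The term $-dz_1\wedge\gamma/z_1^2$ in $d\alpha$ has a double pole unless $\gamma|_{z_1=0}=0$ as a form in the remaining variables. Hence $\gamma=z_1\gamma'$, and $\alpha=\frac{dz_1}{z_1}\wedge\beta+\gamma'$.

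From $d\alpha$ having only a simple pole along $z_1$, the forms $\beta$ and $\gamma'$ again satisfy the log condition with respect to $z_2\cdots z_k$; here I would restrict the coefficients to slices. Induction on $k$ then finishes.

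This is the step I expect to require the most care: the bookkeeping of poles in $d\alpha$ has to be done cleanly so that the induction really applies to $\beta$ and $\gamma'$. Alternatively, one can cite \cite[Chapter 2]{EV92} for this characterization.

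\textbf{Step 2: the exact sequences.} Once the local bases are available, all three sequences are checked locally.

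\emph{(a)} The residue map $\Omega^1_X(\log D)\to\bigoplus_i\mathcal{O}_{D_i}$ sends $\sum_i f_i\,dz_i/z_i+\sum_j g_j\,dz_j$ to $(f_i|_{D_i})_i$. It is surjective, and its kernel consists of the forms with each $f_i\in(z_i)$, which is exactly $\Omega^1_X$.

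\emph{(b)} Writing $\alpha=\frac{dz_1}{z_1}\wedge\beta+\gamma$ with $\beta,\gamma$ free of $dz_1$, I set $\operatorname{Res}_{D_1}\alpha=\beta|_{D_1}$. This lies in $\Omega^{p-1}_{D_1}(\log (D-D_1)|_{D_1})$, because $(D-D_1)|_{D_1}$ is simple normal crossing on $D_1$ with local equation $z_2\cdots z_k$. The map is independent of coordinates, since it can be characterized intrinsically as the residue. It is surjective because basis elements lift, and its kernel is $\{\alpha:\beta|_{D_1}=0\}=\Omega^p_X(\log(D-D_1))$.

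\emph{(c)} The map is the pullback of forms to $D_1$. On basis elements not involving $dz_1$ it is surjective, and it kills $dz_1\wedge(\cdot)$. Its kernel consists of the forms $dz_1\wedge\beta+z_1\gamma$ with $\beta$ and $\gamma$ log forms along $D-D_1$ and $\gamma$ free of $dz_1$. This equals $z_1\bigl(\tfrac{dz_1}{z_1}\wedge\beta+\gamma\bigr)$, which is exactly $\Omega^p_X(\log D)(-D_1)$.

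\textbf{Step 3: logarithmic Serre duality.} The wedge product gives a pairing
$$\Omega^p_X(\log D)\otimes\Omega^{n-p}_X(\log D)\to\Omega^n_X(\log D)=K_X\otimes\mathcal{O}_X(D).$$
It is perfect, because in the local bases of Step 1 complementary basis elements pair to $\pm\, dz_1\cdots dz_n/(z_1\cdots z_k)$. Hence
$$\Omega^p_X(\log D)^{\vee}\cong\Omega^{n-p}_X(\log D)\otimes K_X^{-1}\otimes\mathcal{O}_X(D)^{-1}.$$
Applying ordinary Serre duality to the locally free sheaf $\Omega^p_X(\log D)\otimes L$ yields
$$H^q(X,\Omega^p_X(\log D)\otimes L)^*\cong H^{n-q}\bigl(X,(\Omega^p_X(\log D)\otimes L)^\vee\otimes K_X\bigr).$$
Substituting the dual computed above, the sheaf on the right becomes $\Omega^{n-p}_X(\log D)\otimes L^{-1}\otimes\mathcal{O}_X(D)^{-1}$. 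Dualizing the finite-dimensional spaces then gives the isomorphism stated in part (3).
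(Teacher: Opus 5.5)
Your proposal is correct and takes essentially the same approach as the paper: the paper gives no proof of its own and refers to \cite[Chapter 2]{EV92}, where these facts come from the same local-basis computation, the same residue and restriction maps, and the wedge-product perfect pairing $\Omega^p_X(\log D)\otimes\Omega^{n-p}_X(\log D)\to K_X\otimes\mathcal{O}_X(D)$ combined with ordinary Serre duality. The one loosely justified point, the inductive step in Step 1, does go through without slicing: take $\beta$ free of $dz_1$ and split $z_1\cdots z_k\,d\alpha=z_2\cdots z_k\bigl(-dz_1\wedge d\beta+z_1\,d\gamma'\bigr)$ into its $dz_1$-part and $dz_1$-free part; using that $z_1$ is coprime to $z_2\cdots z_k$, this shows $z_2\cdots z_k\,d\beta$ and $z_2\cdots z_k\,d\gamma'$ are holomorphic.
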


\section{Logarithmic type Serre vanishing theorem} \label{Section:Serre Vanishing}

We now prove logarithmic type Serre vanishing Theorem~\ref{MainThm 1}.
The argument is standard.
\begin{proof}[Proof of Theorem~\ref{MainThm 1}]
    We first prove the statement in the case where the coherent sheaf is a line bundle $L$. 
    
    If $p=n$, by Kodaira vanishing theorem, we know that
    $$H^q(X,\Omega^n_X(\log A) \otimes L)=H^q(X,K_X\otimes \mathcal{O}_X(A)\otimes L)=0 \quad \text{ for all } q\geq 1.$$
    
    Assume now that $p\leq n-1$.
    Taking $D=D_1=A$ in the exact sequence of Proposition~\ref{Prop:Exact sequence} (2.c) and tensoring with $\mathcal{O}_X(A)\otimes L$, the sequence
    $$0\rightarrow\Omega_X^p(\log A)\otimes L\rightarrow\Omega_X^p\otimes\mathcal{O}_X(A)\otimes L\rightarrow\Omega^p_A\otimes(\mathcal{O}_X(A)\otimes L)|_A\rightarrow0$$
    is exact.
    We have the associated long exact sequence
    $$H^{q-1}(A,\Omega^p_A\otimes(\mathcal{O}_X(A)\otimes L)|_A)\rightarrow H^q(X,\Omega_X^p(\log A)\otimes L)\rightarrow H^q(X,\Omega_X^p\otimes\mathcal{O}_X(A)\otimes L).$$
    By Akizuki-Kodaira-Nakano vanishing theorem, the first term and the third term vanish if $p+q\geq n+1$ and $p\leq n-1$.
    Hence the middle term vanishes for $p+q\geq n+1$.
    Therefore, we have proved that for any line bundle $L$ on $X$, there exists an integer $m_0=m_0(L)$ such that for every smooth divisor $A\in|mH|$ with $m\geq m_0$,
    $$H^q(X,\Omega^p_X(\log A) \otimes L)=0 \quad
    \text{ for all } p+q\geq n+1.$$
    
    We now prove the statement for any coherent sheaf by descending induction on the cohomological degree $q$.
    
    For the given coherent sheaf $\mathcal{F}$, choose an integer $k\gg 0$ such that $\mathcal{F}\otimes \mathcal{O}_X(kH)$ is globally generated.
    Then there exists a surjection
    $$\bigoplus_{i=1}^r \mathcal{O}_X(-kH)\twoheadrightarrow \mathcal{F}.$$
    Set $L_i=\mathcal{O}_X(-kH)$ for $i=1,\ldots, r$ and denote the kernel by $\mathcal{K}$, so that we have an exact sequence
    \begin{equation*}
        0\rightarrow\mathcal{K}\rightarrow\bigoplus_{i=1}^rL_i\rightarrow\mathcal{F}\rightarrow 0.
    \end{equation*}
    By the first part of the proof, there exists an integer $m_1$ such that for every $m\geq m_1$ and smooth $A\in|mH|$, 
    \begin{equation}\label{Formula:vanishing for line bundle}
        H^q(X,\Omega^p_X(\log A) \otimes L_i)=0
        \quad \text{ for all } p+q\geq n+1.
    \end{equation}
    Consider the exact sequence
    \begin{equation}\label{short sequence}
        0\rightarrow\mathcal{K}\otimes\Omega_X^p(\log A)\rightarrow\bigoplus_{i=1}^rL_i\otimes\Omega_X^p(\log A)\rightarrow\mathcal{F}\otimes\Omega_X^p(\log A)\rightarrow 0
    \end{equation}
    and the associated long exact sequence
    $$\bigoplus_{i=1}^rH^n(X,\Omega_X^p(\log A)\otimes L_i)\rightarrow H^n(X,\Omega_X^p(\log A)\otimes\mathcal{F})\rightarrow H^{n+1}(X,\Omega_X^p(\log A)\otimes\mathcal{K}).$$
    Note that (\ref{Formula:vanishing for line bundle}) implies that 
    $$\bigoplus_{i=1}^rH^n(X,\Omega_X^p(\log A)\otimes L_i)=0 \quad \text{ for } m\geq m_1 \text{ and } p\geq 1,$$
    and 
    $$H^{n+1}(X,\Omega_X^p(\log A)\otimes\mathcal{K})=0$$
    since $\dim X=n$.
    Thus, 
    $$H^n(X,\Omega_X^p(\log A)\otimes\mathcal{F})=0 \quad \text{ for } m\geq m_1 \text{ and } p\geq 1,$$
    that is, the theorem holds for $q=n$.
    
    Let $q\leq n-1$ and assume that the theorem holds for
    every coherent sheaf and for all cohomological degrees strictly larger than $q$ (i.e., for $q+1,q+2,...,n$).
    Moreover, the statement is valid in the following uniform sense: for any coherent sheaf $\mathcal{G}$, there exists an integer $m(\mathcal{G})$ such that for all $m\geq m(\mathcal{G})$ and smooth $A\in|mH|$, 
    $$H^l(X,\Omega_X^p(\log A)\otimes\mathcal{G})=0 \quad \text{ for } p+l\geq n+1 \text{ with } l\geq q+1.$$
    
    Consider the long exact sequence associated to the exact sequence (\ref{short sequence})
    \begin{align*}
        \cdots\rightarrow& \bigoplus_{i=1}^rH^q(X,\Omega_X^p(\log A)\otimes L_i)\rightarrow H^q(X,\Omega_X^p(\log A)\otimes\mathcal{F}) \\ 
        \rightarrow& H^{q+1}(X,\Omega_X^p(\log A)\otimes\mathcal{K})\rightarrow\bigoplus_{i=1}^rH^{q+1}(X,\Omega_X^p(\log A)\otimes L_i)\rightarrow\cdots
    \end{align*}
    By (\ref{Formula:vanishing for line bundle}), we have 
    $$\bigoplus_{i=1}^rH^q(X,\Omega_X^p(\log A)\otimes L_i)=0 \quad \text{ for } m\geq m_1 \text{ and } p+q\geq n+1.$$
    Therefore, the first term and the fourth term vanish.
    Since $\mathcal{K}$ is a coherent sheaf, the third term vanishes for $m$ large enough by the induction hypothesis applied to $\mathcal{K}$ at degree $q+1$,
    that is,
    $$H^{q+1}(X,\Omega_X^p(\log A)\otimes\mathcal{K})=0 \quad \text{ for } m\geq m(\mathcal{K}) \text{ and } p+q\geq n+1.$$
    This implies that there exists an integer $m_0=m_0(\mathcal{F})$ such that for every smooth divisor $A\in|mH|$ with $m\geq m_0$,
    $$H^q(X,\Omega^p_X(\log A) \otimes \mathcal{F})=0 \quad
    \text{ for all } p+q\geq n+1.$$
    
    This completes the proof of Theorem~\ref{MainThm 1}.
\end{proof}

\begin{remark}
    In general, one cannot expect that 
    $$H^q(X,\Omega_X^p(\log A)\otimes \mathcal{F})=0 \quad \text{ for all } q\geq 1 \text{ and } p\geq 1$$
    for $m$ sufficiently large.
    We provide a counterexample here. 
    Let $X=\mathbb{P}^2$ be the projective space of dimension two and $H=\mathcal{O}_X(1)$ the hyperplane bundle. 
    Consider the coherent sheaf $\mathcal{O}_X$.
    We show that $H^1(X,\Omega_X^1(\log A))\neq 0$ for $A\in |mH|$ with $m\geq 4$.
    
    Consider the Poincar\'e residue sequence
    $$0\rightarrow \Omega_X^1\rightarrow \Omega_X^1(\log A)\rightarrow i_*\mathcal{O}_A\rightarrow 0,$$
    where $i: A\hookrightarrow X$ is the inclusion,
    and the associated long exact sequence 
    \begin{align*}
        \cdots\rightarrow H^1(X,\Omega_X^1)
        \rightarrow H^1(X,\Omega_X^1(\log A))
        \rightarrow H^1(A,\mathcal{O}_A)
        \rightarrow H^2(X,\Omega_X^1)
        \rightarrow\cdots.
    \end{align*}
    Note that $H^2(X,\Omega_X^1)=0$. 
    Thus $\dim H^1(X,\Omega_X^1(\log A))\geq \dim H^1(A,\mathcal{O}_A)$.
    Since $$\dim H^1(A,\mathcal{O}_A)=g=\frac{(m-1)(m-2)}{2}\geq 1,$$
    where $g$ is the genus of the smooth curve $A$.
    This shows that Theorem~\ref{MainThm 1} does not hold without the assumption $p+q\geq n+1$.
\end{remark}

\section{Bogomolov type vanishing theorem} \label{Section:Bogomolov Vanishing}

In this section, we first prove Theorem~\ref{MainThm 2} on projective manifolds.
\begin{proof}[Proof of Theorem~\ref{MainThm 2}]
    Let us first assume that $\nd(\{\alpha\})=n$. In this case, $\{\alpha\}$ is a big class.
    Therefore, there is a quasi-psh function $\varphi$ on $X$ such that $\alpha+\ddbar\varphi\geq\omega$ for some K\"ahler metric $\omega$ on $X$.
    On the other hand, since $\{\alpha\}$ is nef, there exists a smooth function $\varphi_\varepsilon$ such that $\alpha+\ddbar\varphi_\varepsilon\geq-\varepsilon\omega$ for every $\varepsilon>0$.
    Let $h_\infty$ be a smooth metric on $L$.
    Since $c_1(L)-\{\alpha\}=\{\theta+\ddbar\psi\}$ and $\theta+\ddbar\psi\geq 0$, there exists a smooth function $\psi'$ on $X$ such that
    $$\image\Theta_{L,h_\infty}-\alpha+\ddbar\psi'+\ddbar\psi=\theta+\ddbar\psi\geq 0.$$
    Define a singular metric $h_L$ on $L$ by 
    $h_L=h_\infty e^{-\psi'-\psi-(1-\delta)\varphi_\varepsilon-\delta\varphi}$ 
    with $\varepsilon\ll\delta\ll 1.$
    Its curvature current satisfies
    \begin{align*}
        \image\Theta_{L,h_L}&=\image\Theta_{L,h_\infty}+\ddbar\psi'+\ddbar\psi+(1-\delta)\ddbar\varphi_\varepsilon+\delta\ddbar\varphi \\
        &\geq (1-\delta)(\alpha+\ddbar\varphi_\varepsilon)+\delta(\alpha+\ddbar\varphi) \\
        &\geq -(1-\delta)\varepsilon\omega+\delta\omega \\
        &\geq \delta\varepsilon\omega.
    \end{align*}
    Since $\varphi_\varepsilon$ and $\psi'$ are smooth, the strong openness property (Theorem~\ref{Thm:strong openness}) implies that 
    $$\mathcal{I}(h_L)=\mathcal{I}(\psi+\delta\varphi)=\mathcal{I}(\psi)$$
    for $\delta\ll 1$.
    Therefore, by using Theorem~\ref{Thm:Li-Meng-Ning-Wang-Zhou vanishing}, we have 
    $$H^n(X,\Omega^p_X\otimes L\otimes\mathcal{I}(\psi))=H^n(X,\Omega^p_X\otimes L\otimes\mathcal{I}(h_L))=0 \quad \text{ for } p\geq 1.$$
    
    We prove the general case by induction on the dimension $n$ of $X$ and the use of suitable hyperplane sections.
    
    If $\dim X=1$ and $\nd(\{\alpha\})=0$, the statement of Theorem~\ref{MainThm 2} is trivial.
    If $\dim X=1$ and $\nd(\{\alpha\})=1$, then $\nd(\{\alpha\})=\dim X$ and we can apply the first part of our arguments to conclude that the theorem is valid in this special case.
    So Theorem~\ref{MainThm 2} is proved when $\dim X=1$.
    Suppose the conclusion holds on all projective manifolds of dimension $n-1$.
    
    According to Theorem~\ref{Thm:Restriction}, one can take a smooth divisor $A$ ample enough such that the sequence
    $$0\rightarrow\mathcal{I}(\psi)\otimes\mathcal{O}_X(-A)\rightarrow\mathcal{I}(\psi)\rightarrow\mathcal{I}(\psi|_A)\rightarrow 0$$
    is exact.
    By Proposition \ref{Prop:Exact sequence} (2.b), we have the Poincar\'e residue sequence
    $$0\rightarrow \Omega_X^p\rightarrow \Omega_X^p(\log A)\rightarrow i_*\Omega_A^{p-1}\rightarrow 0,$$
    where $i: A\hookrightarrow X$ is the inclusion.
    Consider the exact sequence 
    $$0\rightarrow \Omega_X^p\otimes L\otimes\mathcal{I}(\psi)\rightarrow \Omega_X^p(\log A)\otimes L\otimes\mathcal{I}(\psi)\rightarrow i_*\Omega_A^{p-1}\otimes L|_A\otimes\mathcal{I}(\psi|_A)\rightarrow 0.$$
    We have the associated long exact sequence 
    \begin{align*}
        \cdots&\rightarrow H^{n-1}(A,\Omega_A^{p-1}\otimes L|_A\otimes\mathcal{I}(\psi|_A))
        \rightarrow H^n(X,\Omega_X^p\otimes L\otimes\mathcal{I}(\psi)) \\
        &\rightarrow H^n(X,\Omega_X^p(\log A)\otimes L\otimes\mathcal{I}(\psi)) .
    \end{align*}
    By taking $A$ ample enough, we have 
    $$H^n(X,\Omega_X^p(\log A)\otimes L\otimes\mathcal{I}(\psi))=0 \quad \text{ for } p\geq 1$$
    by Theorem~\ref{MainThm 1}.
    On the other hand, the induction hypothesis implies that
    $$H^{n-1}(A,\Omega_A^{p-1}\otimes L|_A\otimes\mathcal{I}(\psi|_A))=0 \quad \text{ for } p-1\geq n-1-\nd(\{\alpha\}|_A)+1.$$
    Since the numerical dimension $\nd(\{\alpha\})$ is invariant under slicing if $\nd(\{\alpha\})\leq n-1$, we have $\nd(\{\alpha\}|_A)=\nd(\{\alpha\})$. 
    Thus 
    $$H^n(X,\Omega_X^p\otimes L\otimes\mathcal{I}(\psi))=0 \quad \text{ for } p\geq n-\nd(\{\alpha\})+1.$$
    
    This completes the proof of Theorem~\ref{MainThm 2}.
\end{proof}

We now prove Theorem~\ref{MainThm 3}.

\begin{proof}[Proof of Theorem~\ref{MainThm 3}]
    Note that $\mathcal{I}(\psi)$ is a torsion-free ideal subsheaf in $\mathcal{O}_X$.
    Then $\mathcal{I}(\psi)^{\vee\vee}$ is a rank-one reflexive sheaf. 
    Thus $M:=\mathcal{I}(\psi)^{\vee\vee}$ is a holomorphic line bundle.
    We have an inclusion $\mathcal{I}(\psi)^{\vee\vee}\hookrightarrow\mathcal{O}_X$ induced by the inclusion $\mathcal{I}(\psi)\hookrightarrow\mathcal{O}_X$.
    This inclusion gives a nonzero section $s\in H^0(X,M^{-1})$.
    Let $D=\sum_{i}a_iD_i$ be the zero divisor of section $s$. 
    We have $M^{-1}\simeq\mathcal{O}_X(D)$ and $M\simeq\mathcal{O}_X(-D)$.
    
    On the other hand, we have the following expression of $\mathcal{I}(\psi)$ by \cite[Proposition~3.2]{DP03}. 
    Consider the Siu decomposition of the closed positive $(1,1)$-current $T:=\theta+\ddbar\psi$:
    $$T=\sum_{j=1}^{\infty}\lambda_j[E_j]+R,$$
    where $E_j$ are effective divisors and $R$ is the residue closed positive current such that the Lelong sublevel sets $E_c(R)$, $c>0$, all have codimension two.
    Then we have 
    $$\mathcal{I}(\psi)\subset\mathcal{O}_X(-\sum\lfloor \lambda_j \rfloor E_j),$$
    and the equality holds on $X\setminus Z$ where $Z$ is an analytic subset of $X$ whose components all have codimension at least two.
    
    Therefore, we have $M=\mathcal{O}_X(-\sum\lfloor \lambda_j \rfloor E_j)$. 
    Consider the pseudo-effective line bundle $N:=L\otimes M$. 
    Since $$c_1(N)-\{\alpha\}=c_1(L)-\{\alpha\}+c_1(M)=\{T-\sum\lfloor \lambda_j \rfloor E_j\}$$
    is pseudo-effective, 
    we have $\nd(c_1(N))\geq\nd(c_1(\alpha))$.
    By Serre duality and (1), 
    \begin{align*}
        H^n(X,\Omega_X^p\otimes L\otimes\mathcal{I}(\psi))^*&\cong
        H^0(X,\Omega_X^{n-p}\otimes L^{-1}\otimes\mathcal{I}(\psi)^\vee) \\
        &\cong H^0(X,\Omega^{n-p}_X\otimes N^{-1}) \\
        &=0
    \end{align*}
    for $n-p<\nd(N)$, equivalently, $p\geq n-\nd(N)+1$.
    Therefore, 
    $$H^n(X,\Omega_X^p\otimes L\otimes\mathcal{I}(\psi))=0 \quad \text{for } p\geq n-\nd(\alpha)+1.$$
    
    This completes the proof of Theorem~\ref{MainThm 3}.
\end{proof}

\end{document}